\documentclass[12pt]{amsart}

\usepackage[margin=1in]{geometry}
\usepackage{amsmath,amssymb,amsthm,mathtools}
\usepackage{enumitem}
\usepackage{float}
\usepackage{microtype}
\usepackage[hidelinks]{hyperref}
\usepackage{tikz-cd}
\usetikzlibrary{arrows.meta,positioning,calc,fit,shadows.blur}
\hypersetup{pdftitle={Braided Surface and Small Exotic 4-Manifolds},
  pdfauthor={Anar Akhmedov},
  pdfsubject={Braided quotient blocks and small exotic four-manifolds},
  pdfkeywords={branched covers, braided surfaces, exotic four-manifolds, symplectic sums}}

\definecolor{branchred}{RGB}{235,45,82}
\definecolor{branchblue}{RGB}{25,105,235}
\definecolor{gluegreen}{RGB}{20,175,95}
\definecolor{turquoise}{RGB}{0,185,205}
\definecolor{blockpurple}{RGB}{145,70,225}
\definecolor{warmorange}{RGB}{245,135,25}

\newtheorem{theorem}{Theorem}[section]
\newtheorem{proposition}[theorem]{Proposition}
\newtheorem{lemma}[theorem]{Lemma}
\newtheorem{corollary}[theorem]{Corollary}
\newtheorem{question}[theorem]{Question}
\theoremstyle{definition}

\newtheorem{definition}[theorem]{Definition}
\theoremstyle{remark}
\newtheorem{remark}[theorem]{Remark}

\newcommand{\CP}{\mathbb{CP}}
\newcommand{\CPb}{\overline{\mathbb{CP}}}
\newcommand{\Z}{\mathbb Z}

\newcommand{\Mod}{\operatorname{Mod}}
\newcommand{\Diff}{\operatorname{Diff}}

\newcommand{\id}{\operatorname{id}}

\title[Braided Surface and Small Exotic 4-Manifolds]{Braided Surface and Small Exotic $4$-Manifolds}
\author{Anar Akhmedov}
\address{School of Mathematics, University of Minnesota, Minneapolis, MN 55455, USA}
\address{Department of Mathematics, Harvard University, Cambridge, MA 02138, USA}
\email{akhmedov@umn.edu}
\subjclass[2020]{57R55, 57R17, 57M12}
\keywords{branched covers, braided surfaces, exotic four-manifolds, symplectic sums}
\date{}

\begin{document}

\begin{abstract}
Let $K$ be the trefoil or figure-eight knot and let $M_K$ denote zero surgery
on $K$.  We prove that the fiberwise elliptic involution gives a two-fold
branched cover
\[
 M_K\times S^1\longrightarrow T^2\times S^2
\]
whose branch locus is a section together with a connected braided
three-fold multisection.  The corresponding braid words are computed explicitly for
both knots and make visible the relation with Luttinger surgery along
product Lagrangian tori.
More generally, for a genus-$g$ fibered knot with hyperelliptic monodromy,
the same construction gives a degree-$(2g+2)$ braided multisection; this includes every
genus-two fibered knot.  We also state the corresponding surface-bundle
version over $T^2$.
We then track the induced peripheral data and gluing diffeomorphisms in the
constructions of $Y_K$ and its twisted double $X_K$, as studied by the author in
\cite[Section~4]{Akh07}, and through the constructions of the small exotic
manifolds homeomorphic to $\CP^2\#k\CPb^2$, $k=3,5$
\cite{AkhmedovSmall,ABP,AP2008}.  In the symmetric construction of a
$k=5$ example \cite[Section~3.3]{ABP}, each summand admits a double cover
branched along a surface, although the marked complements used in the
twisted double need not be invariant.  Finally, we prove boundary-action,
relative-homology, and local-smoothing obstructions to assembling these
specified local involutions.  We do not construct a global branched-cover
model of $X_5$, $X_5^{\rm sym}$, or $X_3$.
\end{abstract}

\maketitle

\section{Introduction}

The constructions in \cite{Akh07,AkhmedovSmall,ABP,AP2008} considered here
use copies of $M_K\times S^1$ as principal building blocks, together with
the additional blocks recalled below.  Here $M_K$ denotes zero surgery on
the fibered knot $K$.  We study the quotient geometry for $K$ equal to the
right-handed trefoil or the figure-eight knot.

A global two-fold branched-cover description would replace a closed
four-manifold assembled by symplectic sums with explicit branch-surface data
in a simpler base.  The difficulty is not the quotient of an individual
block, but the equivariant compatibility of the boundary actions and marked
surfaces used in the sums.  This paper computes the local quotients, carries
their markings through the constructions, and proves obstructions to gluing
the natural local actions by the original sum data.

The elliptic involution on a torus commutes with the chosen linear
representative of the monodromy of $M_K$ and gives
\[
 M_K\times S^1\longrightarrow T^2\times S^2.
\]
The branch surface is a degree-four braided multisection.  In both knot
cases it splits as a section $B_1$ and a connected three-fold multisection $B_3$.
The origin in the two-torsion is fixed and the three nonzero points form one
orbit.  With the marking and product convention fixed in Section~2, the
braid monodromies may be represented by
\[
 \beta_{\rm tr}=h_\beta h_\alpha,
 \qquad \beta_{\rm fig}=h_\alpha h_\beta^{-1}.
\]
Here $h_\alpha$ and $h_\beta$ are the marked positive half-twists defined
there, and products act from right to left.
These formulas also display the relation with Luttinger surgery along
product Lagrangian tori:
composition of the torus monodromy with a Dehn twist is represented in the
quotient by
composition of the four-point spherical braid with the corresponding
half-twist; compare \cite{ADK}.

We compute the local quotients and their marked boundary data, then apply
them to $Y_K$, $X_K$, $X_5$, and $X_3$.  These closed manifolds are those of
the cited papers; the new point is the obstruction to assembling their
natural local involutions.

The closed constructions retain their original provenance.  The fiber sum
$Y_K$, which identifies a fiber torus in one copy with a section torus in
the other and is called the fiber-section sum below, and the knot-indexed
twisted double $X_K$, with their marked gluings, are due to \cite{Akh07}.
The particular $X_5$ construction
recalled here is the sum in \cite{AkhmedovSmall}, which gives a manifold
homeomorphic to $\CP^2\#5\CPb^2$ by summing $Y_K$ with the Matsumoto block.
The related symmetric construction in \cite[Section~3.3]{ABP} expresses a
$k=5$ example, denoted here by $X_5^{\rm sym}$, as a genus-two twisted double
of $(M_K\times S^1)\#2\CPb^2$.  The particular $X_3$ model used below is the
construction of \cite{AP2008}, and the families denoted here by $X_2(m)$ and
$X_4(m)$ are the corresponding families from \cite{APodd}.  The
fundamental-group and minimality arguments, together with gauge-theoretic
arguments where applicable, are recalled from the indicated cited
constructions.

Our notation separates two uses of a subscript.  The symbol $X_K$ denotes
the knot-indexed twisted double defined in Section~\ref{subsec:XK-double}.
By contrast, the numerical subscript in $X_5$ or $X_3$ records the number
$k$ in the homeomorphism type $\CP^2\#k\CPb^2$; these manifolds are not
specializations of $X_K$.  Similarly, $X_2(m)$ and $X_4(m)$ denote the
one-parameter families homeomorphic to $\CP^2\#2\CPb^2$ and
$\CP^2\#4\CPb^2$, respectively, with $m$ the surgery parameter.

For the closed constructions in the remainder of this Introduction, $K$ is
the right-handed trefoil.  We record the local quotient pairs, their
$\mathbb Z/2$ monodromy homomorphisms, and the gluing diffeomorphisms in the
covering manifolds for $Y_K$, its twisted double $X_K$, $X_5$, and $X_3$.
These local involutions do not, under the original gluing maps, define a
single orientation-preserving involution with two-dimensional fixed set on
the closed manifold.

The involutions induced on the boundaries of a fiber neighborhood and a
fixed-section neighborhood are not conjugate.  A
branched cobordism carrying the prescribed boundary actions and covering
characters must have a double cover with nontrivial relative homology.
Moreover, the natural elliptic action does not preserve the connected marked
relative smoothing of the fiber and section used in either the
$X_5^{\rm sym}$ construction or
the $X_3$ construction.  Thus the results give necessary compatibility
conditions for extending the local involutions and isolate the data that the
original gluing construction does not preserve.

\begin{theorem}\label{thm:main-local}
Let $K$ be the trefoil or figure-eight knot.  The fiberwise elliptic
involution defines a double branched cover
\[
 M_K\times S^1\longrightarrow T^2\times S^2.
\]
Its branch surface is the disjoint union $B_1\sqcup B_3$ of a section and a
connected three-fold multisection.  Both components are tori, and
\[
 [B_1]+[B_3]=4[T^2\times\{\mathrm{pt}\}]
 \quad\text{in }H_2(T^2\times S^2;\mathbb Z),
 \qquad B_1^2=B_3^2=0.
\]
\end{theorem}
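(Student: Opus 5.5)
The plan is to write $M_K$ as a torus bundle over $S^1$ and take the quotient fiberwise. Since $K$ is a genus-one fibered knot, $M_K$ is the mapping torus $T_\phi=T^2\times[0,1]/(x,1)\sim(\phi(x),0)$ of the closed-up monodromy. I fix the linear representatives
\[
\phi_{\rm tr}=\begin{pmatrix}1&1\\-1&0\end{pmatrix},\qquad \phi_{\rm fig}=\begin{pmatrix}2&1\\1&1\end{pmatrix}.
\]
The elliptic involution $\iota(x)=-x$ is central in $SL(2,\Z)$, so it commutes with $\phi$. Hence $\iota\times\id$ descends to an involution $\tau$ of $M_K$, and then to $M_K\times S^1$ acting trivially on the $S^1$ factor. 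Fiberwise, $T^2/\iota=S^2$ with four branch points, the images of the two-torsion $T^2[2]$. Therefore
\[
M_K/\tau\to S^1
\]
is an $S^2$-bundle, with the descended orientation-preserving monodromy $\bar\phi$. Any orientation-preserving diffeomorphism of $S^2$ is isotopic to the identity, so this bundle is $S^1\times S^2$. Taking the product with the last circle gives
\[
(M_K\times S^1)/(\tau\times\id)\cong T^2\times S^2.
\]
Since the action is fiberwise and has isolated fixed points in each fiber, the quotient map is a double branched cover along the surface $\mathrm{Fix}\times S^1$.

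Next I compute the branch locus. The fixed set of $\tau$ in $M_K$ is the mapping torus of $\phi$ restricted to $T^2[2]\cong(\Z/2)^2$, which is a closed $1$-manifold. The permutation is given by $\phi\bmod 2$. For the trefoil, $\phi\equiv\begin{pmatrix}1&1\\1&0\end{pmatrix}$ has order $3$ on the three nonzero points. For the figure-eight, $\phi\equiv\begin{pmatrix}0&1\\1&1\end{pmatrix}$, which also has order $3$. In both cases $0$ is fixed and the nonzero points form one orbit. The fixed set is therefore two circles: $c_1$ meets each fiber once and $c_3$ meets each fiber three times. Crossing with $S^1$ gives $B_1=c_1\times S^1$ and $B_3=c_3\times S^1$. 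Both are tori, they are disjoint, and $B_3$ is connected. Their images in $T^2\times S^2$ are a section and a connected three-fold multisection of the projection to $T^2$, where the base torus is the $S^1$ of the mapping torus times the extra $S^1$.

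For the homology classes, $H_2(T^2\times S^2)$ is spanned by $[T^2\times\mathrm{pt}]$ and $[\mathrm{pt}\times S^2]$. The $T^2\times\mathrm{pt}$-coefficient of a class is its intersection with a fiber $\mathrm{pt}\times S^2$. This gives $1$ for $B_1$ and $3$ for $B_3$, which accounts for the total of $4$. The remaining difficulty, and I expect it to be the main one, is showing that the $[\mathrm{pt}\times S^2]$-coefficients vanish. Here I use that both components have the product form $\gamma\times S^1$, where $\gamma\subset S^1\times S^2$ is a closed curve and the second $S^1$ is the extra factor. Such a torus is homologous to $\gamma'\times S^1$ with $\gamma'$ a multiple of $S^1\times\mathrm{pt}$, because $H_1(S^1\times S^2)=\Z$ is generated by the base circle. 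Its class is therefore a multiple of $[T^2\times\mathrm{pt}]$, with no $S^2$ component. Equivalently, the class of $\gamma\times S^1$ is determined by $[\gamma]\in H_1(S^1\times S^2)$ through the Künneth cross product, so $[B_1]=[T^2\times\mathrm{pt}]$ and $[B_3]=3[T^2\times\mathrm{pt}]$. Self-intersections then follow: $B_i^2=\lambda^2[T^2\times\mathrm{pt}]^2=0$. Alternatively, push $\gamma\times S^1$ off itself along a transverse direction in $S^1\times S^2$, which makes it disjoint from itself.

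The care point is the identification of the branch surface with $\mathrm{Fix}$, with the correct orientation, via the local model $z\mapsto z^2$. The braided description is not needed for this statement beyond connectivity, which follows from the orbit computation above.
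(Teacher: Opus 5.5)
Your proof is correct. It matches the paper on the construction of the cover and on the branch locus: the fiberwise quotient, the triviality of the $S^2$ mapping torus by connectedness of $\Diff^+(S^2)$, and the $1+3$ orbit decomposition of $T^2[2]$ under the mod-two monodromy. You differ from the paper only in the homological step.

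\textbf{The paper's argument.} It writes $[B_K]=4A+dF$, with $A=[T^2\times\{\mathrm{pt}\}]$ and $F=[\{\mathrm{pt}\}\times S^2]$. The branched-cover signature formula $\sigma(M_K\times S^1)=2\sigma(T^2\times S^2)-\tfrac12 B_K^2$, with both signatures zero, gives $B_K^2=0$ and hence $d=0$. It then separates the components by writing $[B_1]=A+d_1F$ and $[B_3]=3A+d_3F$, and solving $d_1+d_3=0$ together with $B_1\cdot B_3=3d_1+d_3=0$.

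\textbf{Your argument.} You use the product form $B_i=\gamma_i\times S^1$ inside $(M_K/\tau)\times S^1\cong(S^1\times S^2)\times S^1$. Since $[\gamma_i\times S^1]=[\gamma_i]\times[S^1]$, and $[\gamma_i]\in H_1(S^1\times S^2)\cong\Z$ is $r_i$ times the base circle, each class is $r_iA$ directly. Pushing off along a nonvanishing normal field of $\gamma_i$ in the $3$-manifold then shows $B_i^2=0$ geometrically, not just homologically.

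\textbf{Comparison.} Your route is more elementary and works component by component; it needs neither the signature formula nor the disjointness bookkeeping. It does rely on the quotient identification having the form $\psi\times\mathrm{id}_{S^1}$, which both constructions provide. The paper's route uses only that $B_K$ is an unbranched multisection whose double cover has signature zero, so it does not depend on the extra circle splitting off. It also links this local computation to the Euler-characteristic and signature formulas the paper uses later for its numerical restrictions.
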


The next theorem concerns the specified local involutions and marked
gluings; it does not rule out a different involution on a closed manifold.

\begin{theorem}\label{thm:main-obstruction}
The involutions induced on the boundaries of a fiber neighborhood and a
fixed-section neighborhood are not equivariantly diffeomorphic.  If a
fiber--section branch cobordism realizes these boundary covers, the two
boundary inclusions into its double cover cannot both be isomorphisms on
$H_1(-;\mathbb Z)$.  In the $X_5^{\rm sym}$ and $X_3$ constructions, the
natural local involutions do not preserve the chosen connected relative
smoothings with their prescribed boundary collars.  For the natural
$X_3$ elliptic involution, no connected invariant relative smoothing has
these collars.
\end{theorem}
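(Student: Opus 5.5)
The theorem immediately above is Theorem~\ref{thm:main-obstruction}; it rests on the local model of Theorem~\ref{thm:main-local}. I would prove its four assertions separately, with the quotient picture as the common input. Near a fiber $F=T^2\times\{\mathrm{pt}\}\subset M_K\times S^1$, the elliptic involution acts on $\partial\nu F\cong T^3$ as $-1$ on the two fiber directions and $+1$ on the meridian. The quotient is $S^2\times S^1$, branched along four circles, and the $\mathbb Z/2$ monodromy character is nontrivial on the two fiber generators and trivial on the meridian. Near the fixed section, the involution instead fixes the section $T^2$ pointwise and acts as rotation by $\pi$ on each normal disk. On the boundary $T^3$ it is therefore the free action $(x,y,\theta)\mapsto(x,y,\theta+\pi)$, whose quotient is again $T^3$.

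For the first assertion I will compare fixed sets. The fiber-boundary action has fixed points: the four two-torsion points times the meridian circle. The section-boundary action is free. Any equivariant diffeomorphism carries fixed sets to fixed sets, so the two actions cannot be equivariantly diffeomorphic, whatever the marking. As a check, the quotients also differ: $H_1$ of the first is $\mathbb Z$, of the second $\mathbb Z^3$.

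For the $H_1$ assertion I will argue by contradiction. Let $W$ be the double cover of a fiber--section branch cobordism. Suppose both boundary inclusions $\partial_\pm W\to W$ induce isomorphisms on $H_1(-;\mathbb Z)$. The composite isomorphism $H_1(\partial_-W)\to H_1(\partial_+W)$ is then $\mathbb Z/2$-equivariant for the induced actions, since the deck transformation of $W$ restricts to the two boundary actions. On $H_1(T^3)$, the fiber-boundary involution acts by $\mathrm{diag}(-1,-1,1)$ and the section-boundary involution acts by the identity. No equivariant isomorphism can intertwine these: the $(-1)$-eigenspace has rank $2$ on one side and rank $0$ on the other. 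This is the contradiction.

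The smoothing statements are where the real work lies. I would write the connected relative smoothing of $F\cup S$ at their transverse intersection point locally as $\{zw=\epsilon\}$ in $\mathbb C^2$, with fiber $\{w=0\}$ and section $\{z=0\}$. The elliptic involution acts near the point as $(z,w)\mapsto(-z,w)$, because it fixes the section and is $-1$ on the fiber directions. The image of the smoothing is then $\{zw=-\epsilon\}$, which is a different surface. The prescribed boundary collars are fixed by the marked gluings recorded for $X_5^{\rm sym}$ and $X_3$, and these pin the phase of $\epsilon$. So the natural smoothing is not preserved, which gives the third assertion.

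For the stronger $X_3$ claim, suppose $\Sigma$ is a connected invariant smoothing with these collars. Its intersection with a small sphere around the point is a Hopf link on which the involution acts by $-1$ on one component and trivially on the other. An invariant connected annulus bounding this link would have to carry the restricted involution. By Smith theory its fixed set in the annulus is a union of arcs, and every such arc must end on fixed points of the boundary. There are none on the first component, since that component is moved freely by $-1$. The Lefschetz fixed-point count on the annulus therefore forces the action to swap ends, which is incompatible with the collars being fixed setwise.

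I expect this last Smith-theory/Lefschetz step to be the main obstacle: it requires pinning down exactly which collar data the $X_3$ construction of \cite{AP2008} prescribes.
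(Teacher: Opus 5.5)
Your first three parts agree with the paper. You compare fixed sets (four circles versus a free action), which is the paper's argument. You show that an equivariant isomorphism on $H_1$ would intertwine $\operatorname{diag}(-1,-1,1)$ with $I_3$; your eigenspace-rank count is equivalent to the paper's trace count $-1\neq 3$. You use the local formula sending $\{zw=\epsilon\}$ to $\{zw=-\epsilon\}$. Two small corrections:
\begin{itemize}
\item You do not need the collars to ``pin the phase'' of $\epsilon$, since the two annuli are disjoint for every $\epsilon\neq0$.
\item Your side remark about the boundary character is inaccurate, though unused. Downstairs, $\chi_F$ is $1$ on the branch meridians and $0$ on $n$. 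The image of a generic parallel of a fiber curve encloses two branch points, so it has character $0$.
\end{itemize}

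The genuine gap is in the last assertion. Your Smith-theory description of the fixed set is wrong here. The section-side collar of $\Sigma$ lies in $S=\{z=0\}=\operatorname{Fix}(\tau)$, so $\operatorname{Fix}(\tau|_\Sigma)$ contains a nonempty open subset of $\Sigma$, not a union of arcs. Moreover, a pointwise-fixed boundary circle with arcs attached is not a submanifold, which the fixed set of a smooth involution must be.

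The Lefschetz step also gives no contradiction. If $\tau$ preserves each end, it acts by $+1$ on $H_1$ of the annulus, because it fixes the section boundary circle pointwise. Then $L(\tau|_\Sigma)=1-1=0$, which is exactly the Lefschetz number of the identity, and the identity has fixed set of Euler characteristic $0$. So nothing ``forces the action to swap ends.'' Swapping is impossible anyway, since one boundary circle is fixed pointwise.

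The missing idea is rigidity of finite-order maps. Average a metric on the connected surface $\Sigma$ so that $\tau|_\Sigma$ is an isometry. An isometry that is the identity on a nonempty open set is the identity everywhere; equivalently, a $2$-dimensional component of the fixed submanifold is open and closed in $\Sigma$. Hence $\Sigma\subset\operatorname{Fix}(\tau)$. This contradicts the fiber-side collar lying in $\{w=0\}$, where $\tau$ acts by $z\mapsto -z$ freely away from the origin.

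This argument uses only connectedness of $\Sigma$ and the two collars agreeing with $S$ and $F$. It needs neither the annulus topology nor any further marking data from \cite{AP2008}, so the obstacle you anticipated is not the real one.
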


Consequently, the original local branched covers cannot simply be glued
through the published sum data.  This conclusion does not exclude a
different involution or a non-product interpolating cobordism.

The torus-bundle quotient, braid words, and the stated boundary-action and
homology obstructions apply to both knots.  The detailed $X_K$, $X_5$, and
$X_3$ fundamental-group calculations use the right-handed trefoil, as in
\cite{AkhmedovSmall,ABP,AP2008}; no figure-eight analogue of those closed
examples is asserted here.

Throughout, $\approx$ denotes orientation-preserving homeomorphism.  All
fiber-sum gluings use the standard meridian-reversing convention.

Sections~2--3 establish the torus-bundle quotient and the marked descriptions
of $Y_K$ and $X_K$.  Sections~4--5 develop the equivariant-sum criterion and
the boundary obstruction.  Section~6 treats the Matsumoto block.  Section~7
treats $X_5$ and $X_5^{\rm sym}$, Section~8 treats the $X_3$
construction, and the final section gives numerical restrictions on possible
quotient manifolds.

\section{\texorpdfstring{$M_K\times S^1$ as a double branched cover}{The torus-bundle block as a double branched cover}}
\label{sec:MK-cover}

We first record the higher-genus form of the construction.  Let $K$ be a
fibered knot of genus $g\geq1$, let $\widehat\Sigma_g$ be the closed fiber after
zero surgery, and let
\[
 \varphi_K:\widehat\Sigma_g\longrightarrow\widehat\Sigma_g
\]
be its capped monodromy.  Thus
\[
 M_K=\widehat\Sigma_g\times_{\varphi_K}S^1.
\]
Call the monodromy \emph{hyperelliptic} if its mapping class admits a
diffeomorphism representative $f$ commuting with a fixed hyperelliptic
involution $\iota_g$ of $\widehat\Sigma_g$.  We form the mapping torus using
such a representative; replacing the monodromy by an isotopic representative
does not change its diffeomorphism type.

\begin{theorem}\label{thm:general-hyperelliptic-block}
Suppose that the capped monodromy of a genus-$g$ fibered knot $K$ is
hyperelliptic in the preceding sense, and choose $f$ with
$f\iota_g=\iota_gf$.  Then $\iota_g\times\id_{S^1}$ defines an involution
on $(\widehat\Sigma_g\times_fS^1)\times S^1$, identified with
$M_K\times S^1$, and its quotient map is a two-fold branched cover
\[
 M_K\times S^1\longrightarrow T^2\times S^2.
\]
The branch locus is a braided multisection of degree $2g+2$.  Its connected
components are tori indexed by the orbits of the induced permutation of the
$2g+2$ Weierstrass points.  If $A=[T^2\times\{\mathrm{pt}\}]$, then
\[
 [B_K]=(2g+2)A,\qquad B_K^2=0.
\]
In particular, the conclusion holds for every genus-one and every genus-two
fibered knot.
\end{theorem}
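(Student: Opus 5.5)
Since $f$ commutes with $\iota_g$, the plan is to build the quotient in stages: first the quotient of each fiber, then the induced map on the base, and finally the product with $S^1$.

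\emph{Step 1: the action is well defined.} Write $N_f=\widehat\Sigma_g\times[0,1]/(x,1)\sim(f(x),0)$. The map $(x,t)\mapsto(\iota_g x,t)$ respects the identification exactly because $f\iota_g=\iota_g f$. Hence $\iota_g$ defines a smooth involution $\tau$ of $N_f$ that preserves each fiber. Then $\tau\times\id_{S^1}$ is an orientation-preserving involution of $N_f\times S^1$. Since isotopic monodromies give diffeomorphic mapping tori, $N_f\times S^1\cong M_K\times S^1$.

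\emph{Step 2: the fiberwise quotient.} $\widehat\Sigma_g/\iota_g\cong S^2$, and the quotient map is branched at the $2g+2$ Weierstrass points $W$. Since $f$ commutes with $\iota_g$, it permutes $W$ and descends to an orientation-preserving homeomorphism $\bar f$ of $S^2$ preserving the finite set $\bar W$. Consequently
\[
N_f/\tau\cong S^2\times_{\bar f}S^1\cong S^2\times S^1,
\]
where the second identification holds because every orientation-preserving diffeomorphism of $S^2$ is isotopic to the identity. The isotopy may move $\bar W$, which is harmless here. Next, $\bar W$ sweeps out a closed $1$-manifold $L\subset S^2\times_{\bar f}S^1$ transverse to the fibers: a closed braid whose braid monodromy is the spherical mapping class of $\bar f$ relative to $\bar W$. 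Multiplying by $S^1$ gives the quotient
\[
(S^2\times S^1)\times S^1\cong T^2\times S^2.
\]
Here the two circle factors form $T^2$, and the branch surface is $B_K=L\times S^1$.

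Because $\tau\times\id$ is locally modeled on $z\mapsto -z$ times a product near each fixed point, the quotient map is a genuine two-fold branched cover with smooth quotient. The fixed set has codimension two in every local chart.

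\emph{Step 3: the branch surface.} The components of $L$ correspond to the cycles of the permutation $\sigma$ that $f$ induces on $W$. A cycle of length $\ell$ gives a circle covering the base circle $\ell$ times. Each component of $B_K$ is therefore a circle times $S^1$, that is, a torus. It projects to $T^2$ as an $\ell$-fold (unbranched) multisection, and the degrees sum to $2g+2$.

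\emph{Step 4: homology and self-intersection.} $H_2(T^2\times S^2)$ is generated by $A=[T^2\times\mathrm{pt}]$ and $F=[\mathrm{pt}\times S^2]$. The intersection numbers of $B_K$ are $B_K\cdot F=2g+2$ and $B_K\cdot A=0$. For the second, choose a point of $S^2$ lying off the braid over a base circle; this requires care because the braid moves, but we can push $A$ to a section $T^2\times\{q_\theta\}$ disjoint from $L\times S^1$, since $S^2\setminus\bar W$ is connected. So $[B_K]=(2g+2)A$.

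For the self-intersection of each component, a push-off suffices: each component $C\times S^1$ has the disjoint displacement $C'\times S^1$, where $C'$ is a parallel push of $C$ within $S^2\times_{\bar f}S^1$ (the normal bundle of a knot in a 3-manifold is trivial). Hence $B_i^2=0$ and $B_K^2=0$.

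\emph{Step 5: genus one and genus two.} In genus one, every mapping class commutes up to isotopy with the elliptic involution $-I$; choose $f$ linear. In genus two, the hyperelliptic involution is central in $\Mod(\Sigma_2)$. By Birman--Hilden it is realized by a representative commuting with $\iota_2$, obtained by lifting a spherical braid representative.

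\emph{Main obstacle.} The step I expect to be most delicate is the identification of the global quotient with $T^2\times S^2$ carrying the stated orientation and product structure. The key points are that the trivialization of $S^2\times_{\bar f}S^1$ can be chosen compatibly with the circle factor, and that $A$ is indeed the class of the section. I would handle this by choosing an explicit isotopy from $\bar f$ to the identity and writing the trivialization directly.
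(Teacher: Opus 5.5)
Your proof is correct. Its skeleton matches the paper's:
\begin{itemize}
\item descend $\iota_g$ to the mapping torus;
\item trivialize the quotient $S^2$-bundle by an isotopy of $\bar f$ to the identity, keeping the branch-point motion as a braid;
\item read off unbranched torus multisections from the orbits of the Weierstrass permutation;
\item use linear representatives in genus one and Birman--Hilden in genus two.
\end{itemize}
You diverge in computing $[B_K]$ and $B_K^2$.

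The paper writes $[B_K]=(2g+2)A+dF$. It obtains $B_K^2=0$ from the branched-cover signature formula $\sigma(M_K\times S^1)=2\sigma(T^2\times S^2)-\tfrac12B_K^2$; both signatures vanish, the first because $M_K\times S^1$ is a product with a closed $3$-manifold. It then reads off $d=0$ from $2(2g+2)d=0$.

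You instead compute both intersection numbers geometrically. The bundle $\{(x,\theta):x\notin\bar W_\theta\}\to S^1$ has connected fiber $S^2\setminus\bar W$, so it has a section. That section gives a torus disjoint from $B_K$ and homologous to $A$, because the loop $\theta\mapsto q_\theta$ is nullhomotopic in $S^2$. Hence $B_K\cdot A=0$. Separately, each component $C\times S^1$ has a disjoint push-off $C'\times S^1$ because the normal bundle of the knot $C$ is trivial.

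The paper's route is quicker and needs no auxiliary section. However, it relies on the signature formula and controls only the total normal Euler number. Your argument is elementary and gives more: $[B_i]=\ell_iA$ and $B_i^2=0$ for each component separately. The paper has to derive exactly this afterwards, by an extra disjointness computation in Proposition~\ref{prop:split}.
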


\begin{proof}
For the chosen commuting representative $f$, the involution $\iota_g$
descends to the mapping torus and acts trivially on the additional circle.
The quotient of
$\widehat\Sigma_g$ by $\iota_g$ is $S^2$, branched at its $2g+2$
Weierstrass points.  Hence the quotient of $M_K\times S^1$ is
\[
 (S^2\times_{\bar f}S^1)\times S^1.
\]
The induced map $\bar f$ preserves the marked branch set and is
orientation preserving.  After forgetting the marked points it is isotopic
to the identity because $\Diff^+(S^2)$ is path connected
\cite{Smale}.  Choosing such an isotopy gives a noncanonical
fiber-preserving identification of the ambient quotient with
$T^2\times S^2$, while the motion of the marked points is retained as a
spherical braid.

Near a fixed point, the involution has the local form
$(z,u)\mapsto(-z,u)$, where $z$ is the oriented normal coordinate to the
fixed surface and $u$ is tangent to it.  Its quotient is smooth, and the image of the fixed
set is a smoothly embedded branch surface.

An orbit of length $r$ traces a connected $r$-fold cover of the first base
circle.  After taking the additional circle it gives a torus mapping
unbranched to $T^2$ with degree $r$.  Thus $B_K$ is an embedded oriented
union of tori, $e(B_K)=0$, and $[B_K]\cdot F=2g+2$.  Under the chosen
product identification write $[B_K]=(2g+2)A+dF$, where
$F=[\{\mathrm{pt}\}\times S^2]$.  With self-intersection computed in the
quotient orientation, note that $M_K\times S^1$ is a product with a closed
oriented three-manifold and therefore has signature zero.  Since
$\sigma(T^2\times S^2)=0$, the double-cover signature formula gives
\[
 0=2\cdot0-\frac12B_K^2.
\]
Here $B_K^2$ denotes the total normal Euler number of the possibly
disconnected branch surface.  Hence $B_K^2=0$.  As
$((2g+2)A+dF)^2=2(2g+2)d$, it follows that $d=0$.

For $g=1$, every mapping class has a linear representative commuting with
the elliptic involution induced by $-I$.  For $g=2$, the hyperelliptic
mapping class group is the full mapping class group.  Equivalently, every
mapping class is represented by a diffeomorphism commuting with a fixed
hyperelliptic involution; this is the genus-two realization statement in
the Birman--Hilden theorem \cite{BirmanHilden}.  This proves the last
assertion.  For $g\geq3$, hyperellipticity is not automatic; the hypothesis
is a sufficient condition for this fiberwise construction.
\end{proof}

The same argument has a bundle form.

\begin{corollary}\label{cor:surface-bundle-hyperelliptic}
Let $E\to T^2$ be an oriented $\Sigma_g$-bundle whose structure group
reduces to
\[
 C_{\Diff^+(\Sigma_g)}(\iota_g)
 =\{h\in\Diff^+(\Sigma_g):h\iota_g=\iota_gh\}
\]
for a fixed hyperelliptic involution $\iota_g$.  Then the fiberwise maps
$\iota_g$ define an involution on $E$.  Its quotient is an oriented
$S^2$-bundle over $T^2$, and $E$ is its double cover branched along a
braided multisection of total degree $2g+2$.  The branch components
correspond to the orbits of the induced monodromy action on the Weierstrass
points.  For $E=(\widehat\Sigma_g\times_fS^1)\times S^1$, with $f$ as in
Theorem~\ref{thm:general-hyperelliptic-block}, the quotient sphere bundle is
the product $T^2\times S^2$.
\end{corollary}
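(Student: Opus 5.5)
The plan is to run the argument of Theorem~\ref{thm:general-hyperelliptic-block} fiberwise over $T^2$ instead of over a single circle. I would first make $E$ an honest fiber bundle with structure group $G=C_{\Diff^+(\Sigma_g)}(\iota_g)$: pick local trivializations $E|_{U_i}\cong U_i\times\Sigma_g$ whose transition functions $g_{ij}\colon U_i\cap U_j\to G$ commute with $\iota_g$. On each chart, $\id_{U_i}\times\iota_g$ is an involution, and commutation with the $g_{ij}$ shows these local involutions agree on overlaps, so they glue to a global involution $\tau$ of $E$ that preserves fibers. Since $\iota_g$ preserves orientation, so does $\tau$. Its fixed set meets each fiber in the $2g+2$ Weierstrass points.

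Next I would pass to the quotient. An element $h\in G$ induces a homeomorphism $\bar h$ of $\Sigma_g/\iota_g\cong S^2$. Composing with $G\to\Diff^+(S^2)$, $h\mapsto\bar h$, gives transition functions $\bar g_{ij}$ that satisfy the cocycle condition. Hence $E/\tau$ is an oriented $S^2$-bundle over $T^2$, with charts $U_i\times S^2$. Near a fixed point the local model from the proof of Theorem~\ref{thm:general-hyperelliptic-block}, $(z,u)\mapsto(-z,u)$, shows three things: the quotient is smooth, the branch set $B$ is a smooth embedded surface, and $E\to E/\tau$ is a double cover branched along $B$. In each chart $B$ is $U_i\times\{2g+2\text{ points}\}$, so it is transverse to the fibers and covers $T^2$ unbranched with degree $2g+2$. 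In other words, $B$ is a braided multisection. Its components are the closures of connected subcovers, and these are exactly the orbits of the monodromy action of $\pi_1(T^2)$ on the Weierstrass points of a fixed fiber.

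The step I expect to need the most care is checking that $\bar h$ is smooth, so that the quotient bundle is smooth. I would handle it by equipping $S^2$ with the smooth structure from the branched-cover chart $z\mapsto z^2$ at the branch points. An $\iota_g$-equivariant diffeomorphism fixes or permutes the Weierstrass points, and near each of them it commutes with $z\mapsto -z$. Such a germ descends to a smooth germ in the coordinate $w=z^2$: its expansion is odd in $z$, so it has the form $z\,a(z^2)$, and squaring gives $w\,a(w)^2$, which is smooth in $w$. This descent is a standard fact I would cite rather than prove.

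Finally I would treat the product case $E=(\widehat\Sigma_g\times_fS^1)\times S^1$. The quotient is $(S^2\times_{\bar f}S^1)\times S^1$. Since $\Diff^+(S^2)$ is path connected \cite{Smale}, $\bar f$ is isotopic to the identity, so the mapping torus $S^2\times_{\bar f}S^1$ is $S^2\times S^1$. The quotient is therefore $T^2\times S^2$, which is exactly the identification already made in the proof of Theorem~\ref{thm:general-hyperelliptic-block}.
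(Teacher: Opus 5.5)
Your outline follows the paper's argument: commuting transition functions make the local involutions glue, the quotient is taken fiber by fiber, branch components are monodromy orbits, and the product case uses connectivity of $\Diff^+(S^2)$. The paper's proof does not address smoothness at all. However, the justification you add for that step is false, so your proof as written has a gap there.

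A smooth germ commuting with $z\mapsto -z$ is odd as a function of $(z,\bar z)$, not a series $z\,a(z^2)$ in $z$ alone. Its descent to $w=z^2$ is generally not smooth. For example, the orientation-preserving linear map $h(z)=z+\varepsilon\bar z$ with $0<\varepsilon<1$ descends to $w\mapsto w+2\varepsilon|w|+\varepsilon^2\bar w$, which is not differentiable at $0$. More generally, if $h(z)=\alpha z+\beta\bar z+O(|z|^3)$, the descent is $\alpha^2w+2\alpha\beta|w|+\beta^2\bar w+O(|w|^2)$. It is therefore differentiable at the branch point only if $\beta=0$, that is, only if $dh$ is complex linear there.

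Such germs genuinely occur in $C_{\Diff^+(\Sigma_g)}(\iota_g)$. The linear figure-eight monodromy $A_{\rm fig}$ is hyperbolic at the origin, hence conformal for no complex structure. So its quotient $\bar f$ is not smooth at that branch point in any chart of the form $w=z^2$. Consequently $h\mapsto\bar h$ lands only in homeomorphisms of $S^2$, and applying Smale directly to $\bar f$ in the product case is also unjustified.

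The fix is not to descend transition functions at all. Let $p\colon E\to T^2$ be the projection, $\widetilde B\subset E$ the fixed surface, and $V=\ker dp$ the vertical tangent bundle. Along $\widetilde B$, $V$ is a normal bundle on which $\tau$ acts by $-1$. Average a fiberwise metric on $V$ over $\tau$. The fiberwise exponential map on $V|_{\widetilde B}$ is then a $\tau$-equivariant, fiber-preserving tubular neighborhood in which $\tau$ is $v\mapsto -v$.

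Regard $V|_{\widetilde B}$ as a complex line bundle using the metric and the fiber orientation. Use $v\mapsto v\otimes v$, from $V|_{\widetilde B}/\pm1$ to $(V|_{\widetilde B})^{\otimes 2}$, as the chart near $B$; it is compatible with the free quotient away from $\widetilde B$. In this smooth structure $B$ is the zero section. The map $E/\tau\to T^2$ is a proper submersion, since near $B$ it is $(V|_{\widetilde B})^{\otimes 2}\to\widetilde B\to T^2$. Ehresmann's theorem then makes it a smooth $S^2$-bundle with $B$ transverse to the fibers.

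In the product case, run the same construction on $\widehat\Sigma_g\times_fS^1\to S^1$. The quotient is a smooth oriented $S^2$-bundle over $S^1$, which is trivial because $\Diff^+(S^2)$ is connected. Then $E/\tau$ is its product with $S^1$, namely $T^2\times S^2$.
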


\begin{proof}
The reduction of structure group makes the local fiberwise involutions agree
under all transition maps.  Taking the quotient fiber by fiber gives the
asserted sphere bundle and branch multisection.  The quotient need not be the
trivial $S^2$-bundle in general.  In the product-mapping-torus case, one
clutching map is the identity and the other is isotopic to the identity in
$\Diff^+(S^2)$, so the quotient bundle is trivial.
\end{proof}

For genus one, write
\[
 A_K\in SL(2,\Z)
\]
for the monodromy of the closed torus fiber obtained after zero surgery. Then
\[
 M_K=T^2\times_{A_K}S^1.
\]
Every linear $A_K$ commutes with $x\mapsto-x$, so
Theorem~\ref{thm:general-hyperelliptic-block} applies.

\begin{proposition}\label{prop:MKcover}
There is a double branched cover
\[
 \pi_K:M_K\times S^1\longrightarrow T^2\times S^2.
\]
Its branch locus $B_K$ is a degree-four braided multisection satisfying
\[
 [B_K]=4[T^2\times\{\mathrm{pt}\}],\qquad B_K^2=0.
\]
\end{proposition}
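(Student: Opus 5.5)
This is the case $g=1$ of Theorem~\ref{thm:general-hyperelliptic-block}, so the plan is to check its hypotheses and then read off the conclusions. After zero surgery the closed fiber is a torus $T^2=\mathbb R^2/\Z^2$. The capped monodromy is isotopic to the linear map $A_K\in SL(2,\Z)$: for the trefoil it has order six, and for the figure-eight it is Anosov. We take the linear representative $f=A_K$ and the elliptic involution $\iota_1(x)=-x$. Since $A_K$ is linear, $A_K(-x)=-A_Kx$, so $f\iota_1=\iota_1 f$ and the monodromy is hyperelliptic in the required sense. The mapping torus $T^2\times_{A_K}S^1$ is diffeomorphic to $M_K$, because isotopic monodromies give diffeomorphic mapping tori. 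Hence $\iota_1\times\id_{S^1}$ is a well-defined involution on $M_K\times S^1$.

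Applying Theorem~\ref{thm:general-hyperelliptic-block} with $g=1$ gives the double branched cover
\[
 \pi_K:M_K\times S^1\longrightarrow T^2\times S^2 .
\]
The quotient of a fiber is $T^2/\pm1\cong S^2$, branched at the four $2$-torsion points $\tfrac12\Z^2/\Z^2$. Therefore the branch locus $B_K$ is a braided multisection of degree $2g+2=4$. The same theorem gives $[B_K]=4A$ and $B_K^2=0$, where $A=[T^2\times\{\mathrm{pt}\}]$. So the displayed formulas follow directly.

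The only point needing care is the product identification of the quotient with $T^2\times S^2$. The homology class $4A$ is taken with respect to the identification fixed in the theorem's proof, which comes from an isotopy of $\bar f$ to the identity in $\Diff^+(S^2)$. In that proof, the coefficient of $F=[\{\mathrm{pt}\}\times S^2]$ in $[B_K]$ was forced to vanish by the signature formula. That argument requires no input specific to genus one, so nothing new is needed here.

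No step is a real obstacle. The one fact to confirm is that the capped monodromy of the zero-surgered torus bundle is indeed the linear $A_K$ up to isotopy. This holds because every mapping class of $T^2$ is represented by a unique element of $SL(2,\Z)$, and that element is linear.
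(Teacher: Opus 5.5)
Your proposal is correct and matches the paper's proof. The paper invokes Theorem~\ref{thm:general-hyperelliptic-block} with $g=1$, having noted just beforehand that every linear $A_K$ commutes with $x\mapsto -x$, and it uses the chosen product trivialization of the quotient sphere bundle, which is exactly your argument.
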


\begin{proof}
This is Theorem~\ref{thm:general-hyperelliptic-block} with $g=1$ and the
chosen product trivialization of the quotient sphere bundle.
\end{proof}

\begin{proposition}\label{prop:split}
For the standard trefoil and figure-eight monodromies,
\[
 B_K=B_1\sqcup B_3,
\]
where $B_1$ is a section and $B_3$ is a connected three-fold multisection. Both
components are tori.
\end{proposition}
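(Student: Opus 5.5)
By Theorem~\ref{thm:general-hyperelliptic-block} with $g=1$, the components of $B_K$ are tori indexed by the orbits of the permutation that $A_K$ induces on the four fixed points of $x\mapsto -x$. These fixed points are the two-torsion points $T^2[2]=\tfrac12\Z^2/\Z^2\cong(\Z/2)^2$. So the proof reduces to a finite computation: determine the orbits of $A_K \bmod 2$ acting on $(\Z/2)^2$.

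The action of $A_K$ on $T^2[2]$ is linear over $\mathbb F_2$, so it is given by the reduction $\bar A_K\in SL(2,\mathbb F_2)\cong S_3$. This reduction fixes the origin $0$ and permutes the three nonzero vectors. For the standard monodromies I will take
\[
A_{\rm tr}=\begin{pmatrix}1&1\\-1&0\end{pmatrix},\qquad A_{\rm fig}=\begin{pmatrix}2&1\\1&1\end{pmatrix}.
\]
Both have trace $\equiv 1 \pmod 2$, so mod $2$ both reduce to matrices of characteristic polynomial $t^2+t+1$, which is irreducible over $\mathbb F_2$. A matrix in $GL(2,\mathbb F_2)$ with this characteristic polynomial has order $3$ and no nonzero fixed vector. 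Hence it cyclically permutes the three nonzero points in a single $3$-cycle. For instance, $\bar A_{\rm tr}$ sends $(1,0)\mapsto(1,1)\mapsto(0,1)\mapsto(1,0)$, and $\bar A_{\rm fig}$ does the same up to orientation.

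The orbits are therefore $\{0\}$ and the three nonzero points. The fixed orbit $\{0\}$ traces the section $B_1=\{[0]\}\times S^1\times S^1$. The length-three orbit traces a connected three-fold multisection $B_3$. The two are disjoint because the fixed-point set of the involution on $M_K\times S^1$ is a disjoint union of the orbit tori. That both components are tori is part of Theorem~\ref{thm:general-hyperelliptic-block}: an orbit of length $r$ gives a connected $r$-fold unbranched cover of the base circle, times the extra $S^1$.

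The main point needing care is that the conclusion holds for the actual monodromy matrices, not only for the representatives chosen above. Any conjugate of $A_K$ in $SL(2,\Z)$, or its inverse, still has odd trace, so its reduction mod $2$ still has irreducible characteristic polynomial and the orbit structure is unchanged. The condition that the trace is odd, $\operatorname{tr}A_{\rm tr}=1$ and $\operatorname{tr}A_{\rm fig}=3$, is all that the argument uses.
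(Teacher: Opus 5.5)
Your proof is correct and follows the paper's route: both reduce the splitting to the orbit structure of $\overline A_K$ on $T^2[2]\cong(\Z/2)^2$, which is $1+3$, and then read off a section and a connected three-fold multisection, each a torus. Your trace-parity/characteristic-polynomial argument justifies the order-three claim in a way that is insensitive to conjugation and inversion, whereas the paper simply states it. The paper's proof also goes on to compute $[B_1]=A$ and $[B_3]=3A$ from $B_1\cdot B_3=0$ and $[B_K]=4A$, which the statement itself does not require.
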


\begin{proof}
The four fixed points of the elliptic involution are identified with
\[
 T^2[2]\cong(\Z/2)^2.
\]
The monodromy acts linearly on this set. For the standard trefoil and
figure-eight genus-one monodromies, its reduction modulo two has order three.
It fixes the origin and cyclically permutes the three nonzero vectors. The
orbit decomposition is therefore $1+3$.

The corresponding components of the branch locus are respectively a section
and a connected three-fold cover of $T^2$. Every finite unbranched cover of a
torus is again a torus.  Write $[B_1]=A+d_1F$ and
$[B_3]=3A+d_3F$.  Since the components are disjoint,
$B_1\cdot B_3=3d_1+d_3=0$, while $d_1+d_3=0$ follows from
$[B_K]=4A$.  Hence $d_1=d_3=0$; in particular
$[B_1]=A$, $[B_3]=3A$, and both components have square zero.
\end{proof}

\begin{remark}
The branch locus is a degree-four multisection, but in these examples it is
not connected. The $1+3$ decomposition is useful because the section component
provides a geometric marker in the quotient picture.
\end{remark}

For the chosen oriented basis of the closed torus fiber, take
\[
 A_{\mathrm{tr}}=\begin{pmatrix}1&1\\-1&0\end{pmatrix},
 \qquad
 A_{\mathrm{fig}}=\begin{pmatrix}2&1\\1&1\end{pmatrix},
\]
up to conjugacy and inversion according to orientation conventions.  Modulo two,
these become
\[
 \overline A_{\mathrm{tr}}=\begin{pmatrix}1&1\\1&0\end{pmatrix},
 \qquad
 \overline A_{\mathrm{fig}}=\begin{pmatrix}0&1\\1&1\end{pmatrix}.
\]
The integral matrices have different dynamical types.  The trefoil matrix
satisfies
\[
 A_{\mathrm{tr}}^3=-I,\qquad A_{\mathrm{tr}}^6=I,
\]
so it has order six in $SL(2,\mathbb Z)$ and order three in
$PSL(2,\mathbb Z)$.  By contrast, $\operatorname{tr}(A_{\mathrm{fig}})=3$,
so $A_{\mathrm{fig}}$ is hyperbolic and has infinite order.  Nevertheless,
their reductions modulo two both have order three:
\[
 \overline A_{\mathrm{tr}}^{\,3}
 =\overline A_{\mathrm{fig}}^{\,3}=I.
\]
More explicitly, their actions on the nonzero two-torsion points are
\[
 \overline A_{\mathrm{tr}}:
 e_1\longmapsto e_1+e_2\longmapsto e_2\longmapsto e_1,
\]
and
\[
 \overline A_{\mathrm{fig}}:
 e_1\longmapsto e_2\longmapsto e_1+e_2\longmapsto e_1.
\]
Thus each fixes $0\in T^2[2]$ and has one orbit of length three on the
nonzero two-torsion points, although the two matrices traverse that orbit in
opposite orders.  This gives the branch decomposition in
Proposition~\ref{prop:split}.

\begin{figure}[t]
\centering
\begin{tikzpicture}[>=Latex,font=\small,
  orbit/.style={line width=1.45pt,-{Latex[length=2.7mm]}},
  panel/.style={rounded corners=8pt,draw=gluegreen!35,fill=gluegreen!3,
    blur shadow={shadow opacity=.18,shadow blur steps=5}}]
 \node[panel,minimum width=11.9cm,minimum height=4.25cm] at (0,0) {};
 \node[anchor=west,font=\bfseries,text=gluegreen!65!black] at (-5.55,1.62)
   {Trefoil monodromy};
 \node[anchor=east,fill=gluegreen!13,draw=gluegreen!65,rounded corners=3pt,
   inner sep=3pt,text=gluegreen!70!black] at (5.45,1.62)
   {$\overline A_{\mathrm{tr}}=\left(\begin{smallmatrix}1&1\\1&0\end{smallmatrix}\right)$};
 \filldraw[fill=turquoise!9,draw=branchblue!60,line width=1pt]
   (-3.35,-.12) ellipse (1.85 and 1.35);
 \node[text=branchblue!85!black] at (-3.35,1.08) {$T^2[2]$};
 \coordinate (o) at (-4.38,.30);
 \coordinate (p) at (-3.30,.73);
 \coordinate (q) at (-4.00,-.80);
 \coordinate (r) at (-2.52,-.61);
 \filldraw[fill=branchred,draw=white,line width=.8pt] (o) circle (4.2pt);
 \node[left,text=branchred] at (o) {$0$};
 \filldraw[fill=branchblue,draw=white,line width=.8pt] (p) circle (4.2pt);
 \filldraw[fill=gluegreen,draw=white,line width=.8pt] (q) circle (4.2pt);
 \filldraw[fill=turquoise,draw=white,line width=.8pt] (r) circle (4.2pt);
 \node[above right,text=branchblue] at (p) {$e_1$};
 \node[below left,text=gluegreen!70!black] at (q) {$e_1+e_2$};
 \node[below right,text=turquoise!75!black] at (r) {$e_2$};
 \draw[orbit,gluegreen,bend right=14] (p) to (q);
 \draw[orbit,gluegreen,bend right=14] (q) to (r);
 \draw[orbit,gluegreen,bend right=14] (r) to (p);
 \draw[-{Latex[length=3mm]},warmorange,line width=1.5pt]
   (-.95,-.10)--node[above,font=\scriptsize,text=warmorange!80!black] {trace} (.15,-.10);
 \draw[branchred,line width=2.7pt] (.85,-1.18)--(.85,1.02);
 \node[text=branchred,align=center] at (.85,1.38) {$B_1$\\section};
 \draw[branchblue,line width=2.7pt] (2.35,-1.18) .. controls (3.25,-.60) and
 (1.75,.50) .. (2.35,1.02);
 \draw[gluegreen,line width=2.7pt] (3.25,-1.18) .. controls (1.72,-.50) and
 (3.58,.53) .. (3.25,1.02);
 \draw[turquoise,line width=2.7pt] (4.15,-1.18) .. controls (3.35,-.55) and
 (4.82,.48) .. (4.15,1.02);
 \node[align=center,text=branchblue!75!black] at (3.25,-1.55)
   {$B_3$ connected 3-fold multisection};
\end{tikzpicture}
\caption{Trefoil orbit picture.  The origin is fixed and traces the red
section $B_1$.  The green cyclic arrows join the three nonzero two-torsion
points into one orbit, whose trace is the connected 3-fold multisection $B_3$.}
\label{fig:trefoil-branch-orbit}
\end{figure}

\section{Local branched-cover models and marked gluing data}
\label{sec:piecewise-braids}

\begin{definition}\label{def:marked-block}
A \emph{marked local branched-cover model} consists of a two-fold quotient
$p:\widetilde V\to V$ of connected manifolds, its branch surface $B$, the
covering character $\chi_p:\pi_1(V\setminus B)\to\mathbb Z/2$, and the
peripheral markings used below.  A marked summing surface in $\widetilde V$
is a square-zero surface, not necessarily invariant, with an ordered
symplectic basis, a positive meridian, and basepoint paths in its complement.
When $V$ is a surface bundle and $B$ is a braided multisection, we call this
a \emph{braided local model}.
\end{definition}

For each construction we give the quotient pairs, braid monodromy, sheet
orbits, boundary markings, and gluing maps.  For surfaces created by
smoothing or blowup we also give their homology classes, self-intersections,
and meridians.  Since the gluing maps need not be equivariant, this local
information need not define a quotient of the assembled manifold.

\begin{remark}\label{rem:globalization-criterion}
For the specified local involutions and markings, the local actions extend
to the closed manifold only if every gluing map is isotopic, through marked
boundary maps, to an equivariant map and the
induced quotient maps identify
the branch pairs and their restricted covering characters.  A braid word
does not determine the normal-circle class.  For a construction involving
smoothings or blowups, the chosen smoothing and blowup data must also be
equivariant, with compatible local framings.
\end{remark}

Each copy of $M_K\times S^1$ contributes the quotient pair
\[
 (T^2\times S^2,B_K),\qquad B_K=B_1\sqcup B_3,
\]
together with the boundary component and peripheral marking used in the
relevant fiber sum.

Fix standard generators $s,t$ of the base torus, with $s$ the mapping-torus
direction.  The marked branch-point monodromy is
\[
 \rho_K:\pi_1(T^2)=\langle s,t\mid[s,t]=1\rangle
 \longrightarrow \Mod(S^2,P),
 \qquad \rho_K(s)=[\overline A_K],\quad \rho_K(t)=1,
\]
where $P$ is the four-point branch set and $\Mod(S^2,P)$ denotes mapping
classes preserving $P$ setwise.  To fix the marking explicitly,
write $T^2=\mathbb R^2/\mathbb Z^2$, let $a$ and $b$ be the horizontal and
vertical oriented curves through the origin, and identify
$T^2/\{\pm1\}$ with the pillowcase sphere.  Its marked points are the images
of
\[
 (0,0),\quad (1/2,0),\quad (0,1/2),\quad (1/2,1/2).
\]
Let $\alpha$ and $\beta$ be the images of $a$ and $b$; they are arcs with a
common endpoint at the image of the origin.  We define the positive
half-twists $h_\alpha,h_\beta$ by requiring that their lifts are the
right-handed Dehn twists $T_a,T_b$.  After choosing a trivialization, a
lift to the spherical braid group may be represented by
\[
 \beta_{\rm tr}=h_\beta h_\alpha,
 \qquad
 \beta_{\rm fig}=h_\alpha h_\beta^{-1},
\]
where products act from right to left.  We choose the isotopy trivializing
the ambient $S^2$ mapping torus so that the spherical braid traced by the
marked points is the
displayed word; changing this isotopy may multiply the braid by the central
full twist in the spherical braid group.  The
class in $\Mod(S^2,P)$ is intrinsic only up
to the stated marking conventions.  A representative in the spherical braid
group additionally depends on a choice of braid lift, while a lift of the
spherical mapping class to the covering torus is determined up to the
elliptic deck involution.  These are distinct ambiguities.  The braid data do
not determine the peripheral normal-circle class, which is recorded
separately.

\begin{figure}[t]
\centering
\begin{tikzpicture}[>=Latex,font=\small,
  orbit/.style={line width=1.45pt,-{Latex[length=2.7mm]}},
  panel/.style={rounded corners=8pt,draw=blockpurple!35,fill=blockpurple!3,
    blur shadow={shadow opacity=.18,shadow blur steps=5}}]
 \node[panel,minimum width=11.9cm,minimum height=4.25cm] at (0,0) {};
 \node[anchor=west,font=\bfseries,text=blockpurple!75!black] at (-5.55,1.62)
   {Figure-eight monodromy};
 \node[anchor=east,fill=blockpurple!12,draw=blockpurple!65,rounded corners=3pt,
   inner sep=3pt,text=blockpurple!75!black] at (5.45,1.62)
   {$\overline A_{\mathrm{fig}}=\left(\begin{smallmatrix}0&1\\1&1\end{smallmatrix}\right)$};
 \filldraw[fill=turquoise!9,draw=branchblue!60,line width=1pt]
   (-3.35,-.12) ellipse (1.85 and 1.35);
 \node[text=branchblue!85!black] at (-3.35,1.08) {$T^2[2]$};
 \coordinate (o) at (-4.38,.30);
 \coordinate (p) at (-3.30,.73);
 \coordinate (q) at (-4.00,-.80);
 \coordinate (r) at (-2.52,-.61);
 \filldraw[fill=branchred,draw=white,line width=.8pt] (o) circle (4.2pt);
 \node[left,text=branchred] at (o) {$0$};
 \filldraw[fill=branchblue,draw=white,line width=.8pt] (p) circle (4.2pt);
 \filldraw[fill=gluegreen,draw=white,line width=.8pt] (q) circle (4.2pt);
 \filldraw[fill=turquoise,draw=white,line width=.8pt] (r) circle (4.2pt);
 \node[above right,text=branchblue] at (p) {$e_1$};
 \node[below left,text=gluegreen!70!black] at (q) {$e_1+e_2$};
 \node[below right,text=turquoise!75!black] at (r) {$e_2$};
 \draw[orbit,blockpurple,bend left=14] (p) to (r);
 \draw[orbit,blockpurple,bend left=14] (r) to (q);
 \draw[orbit,blockpurple,bend left=14] (q) to (p);
 \draw[-{Latex[length=3mm]},warmorange,line width=1.5pt]
   (-.95,-.10)--node[above,font=\scriptsize,text=warmorange!80!black] {trace} (.15,-.10);
 \draw[branchred,line width=2.7pt] (.85,-1.18)--(.85,1.02);
 \node[text=branchred,align=center] at (.85,1.38) {$B_1$\\section};
 \draw[branchblue,line width=2.7pt] (2.35,-1.18) .. controls (3.25,-.60) and
 (1.75,.50) .. (2.35,1.02);
 \draw[gluegreen,line width=2.7pt] (3.25,-1.18) .. controls (1.72,-.50) and
 (3.58,.53) .. (3.25,1.02);
 \draw[turquoise,line width=2.7pt] (4.15,-1.18) .. controls (3.35,-.55) and
 (4.82,.48) .. (4.15,1.02);
 \node[align=center,text=blockpurple!75!black] at (3.25,-1.55)
   {$B_3$ connected 3-fold multisection};
\end{tikzpicture}
\caption{Figure-eight orbit picture.  The fixed origin traces $B_1$.  The
purple cyclic arrows run through the three nonzero two-torsion points in the
order opposite to the trefoil arrows in Figure~\ref{fig:trefoil-branch-orbit}
and produce the connected 3-fold multisection $B_3$.}
\label{fig:figure-eight-branch-orbit}
\end{figure}

\subsection{Relation with Luttinger surgery}

For the linear fibered models used here, the standard Luttinger-surgery
calculation says that $\pm1$ surgery, measured relative to the
Lagrangian product framing, along the product of a fiber curve $c$ and a
base circle changes the monodromy around the base circle transverse to the
surgery torus by a factor $T_c^{\pm1}$.
The sign and the side of composition are determined by the orientation,
slope, and framing conventions; see \cite{ADK}.  The
chosen linear monodromy representatives still commute with $x\mapsto-x$, so
the resulting torus-bundle block again has the fiberwise quotient above.
For an invariant linear fiber curve $c$, its quotient is an arc $\bar c$
joining two branch points.  Under Birman-Hilden, the additional factor is
represented in the quotient by the signed half-twist
$h_{\bar c}^{\pm1}$.  In our conventions, with respect to the ordered
basis $(a,b)$,
\[
 T_a=\begin{pmatrix}1&1\\0&1\end{pmatrix},\qquad
 T_b=\begin{pmatrix}1&0\\-1&1\end{pmatrix}.
\]
Consequently,
\[
 A_{\rm tr}=T_bT_a,\qquad A_{\rm fig}=T_aT_b^{-1}
\]
are represented in the quotient by
\[
 \beta_{\rm tr}=h_\beta h_\alpha,\qquad
 \beta_{\rm fig}=h_\alpha h_\beta^{-1}.
\]
These are mapping-class-level braid monodromy factorizations distinguishing
the figure-eight and trefoil blocks.  They do not assert that a particular
four-dimensional surgery presentation is equivariant.  That requires a
separate check of the surgery torus, its neighborhood, and the chosen
framing.

\subsection{\texorpdfstring{The relative knot-surgery description of $Y_K$}{The relative knot-surgery description of YK}}
\label{subsec:YK-piecewise}

Knot surgery on $T^2\times S^2$ along
$T^2\times\{\mathrm{pt}\}$ replaces $T^2\times D^2$ by
$S^1\times(S^3\setminus\operatorname{int}\nu K)$.  The product-circle
generator and the knot meridian are identified with the two generators of
the removed torus, while the preferred longitude is identified with its
normal circle; orientations are chosen so that the boundary gluing reverses
the normal orientation.  With this peripheral identification the result is
$M_K\times S^1$.
Performing the same replacement along a regular fiber of this manifold, with
the peripheral identification displayed below, gives the fiber sum that
identifies a fiber torus in one copy with a section torus in the other.  We
refer to this below as the fiber-section sum.
In this precise sense $Y_K$ has an iterated knot-surgery description.  This
is a reinterpretation of the construction of
\cite[Section~4]{Akh07}, not a different construction; the terminology is
that of \cite{FS}.

Following the peripheral conventions in
\cite[Section~4 and Lemma~4.6]{Akh07}, write
\[
 P_s=(M_K\times S^1)\setminus\operatorname{int}\nu S,
 \qquad
 P_f=(M_K\times S^1)\setminus\operatorname{int}\nu F,
\]
where $S$ is the distinguished section in the first copy and $F$ is a
regular fiber in the second.  Then
\[
 Y_K=P_s\cup_{\phi_1}P_f.
\]
The two copies carry independent quotient pairs
\[
 (T^2\times S^2,B_K^{(s)}),\qquad
 (T^2\times S^2,B_K^{(f)}),
\]
with
\[
 B_K^{(\ell)}=B_1^{(\ell)}\sqcup B_3^{(\ell)},
 \qquad \ell\in\{s,f\}.
\]
The component $B_1^{(\ell)}$ is traced by the zero two-torsion point, and
$B_3^{(\ell)}$ by the orbit of the three nonzero two-torsion points.  Both
blocks have monodromy $(\beta_K,1)$ on the two base circles.

Removing the zero section recovers the knot exterior product
\[
 P_s\cong(S^3\setminus\operatorname{int}\nu K)\times S^1.
\]
Its boundary marking is
\[
 (b,x,\lambda_K),
\]
where $b$ is the knot meridian, $x$ is the product-circle generator, and
$\lambda_K$ is the preferred longitude.  For the right-handed trefoil, let
$a,b$ be Wirtinger generators satisfying $aba=bab$, with $b$ the chosen
meridian.  Then
\[
 \lambda_K=ab^2ab^{-4}.
\]
We choose the fiberwise group structure so that the distinguished section
$S$ is the zero section.  It is therefore fixed pointwise by the elliptic
involution and maps to $B_1^{(s)}$.
Since $S$ is the ramification surface over $B_1^{(s)}$, restriction of
$\pi_K$ gives a relative double branched cover
\begin{equation}\label{eq:relative-section-cover}
 P_s\longrightarrow
 Q_s=(T^2\times S^2)\setminus\operatorname{int}\nu B_1^{(s)},
\end{equation}
branched along $B_3^{(s)}$.  On the boundary it is a connected free
two-fold cover.  In the equivariant product framing, let
$(\bar b,\bar x,\mu_1)$ be the two section directions and positive normal
meridian in the quotient.  Then
\[
 b\longmapsto\bar b,\qquad x\longmapsto\bar x,
 \qquad \lambda_K\longmapsto2\mu_1.
\]
The covering character
$\chi_s:\pi_1(Q_s\setminus B_3^{(s)})\to\mathbb Z/2$ restricts on the
boundary as
\[
 \chi_s(\bar b)=\chi_s(\bar x)=0,
 \qquad \chi_s(\mu_1)=1,
\]
and a meridian of $B_3^{(s)}$ also has value $1$.  This follows directly
from the local model $(u,w)\mapsto(u,w^2)$ along $B_1^{(s)}$.  Thus the
knot-exterior factor is a relative branched-cover model.  The class
$\lambda_K$ is oriented as the positive normal-circle class to the
ramification section and records the normal direction to the deleted section.  It is not
determined by the spherical braid.

For the fiber-complement block, the quotient of the deleted fiber is a
sphere.  Restricting over a punctured base leaves the punctured section
$B_1^{(f)}$ and punctured three-fold multisection $B_3^{(f)}$; their four boundary
circles are the traces of the four marked branch points.  With fiber
generators $(\gamma_1',\gamma_2')$ and base generators $d,y$, the boundary
marking is
\[
 (\gamma_1',\gamma_2',[d,y]),
\]
where $[d,y]$ is oriented as the negative of the positive normal meridian of
the deleted fiber.  With this convention, the following
orientation-reversing gluing sends the positive section meridian to
$[d,y]$, rather than to its inverse.

With the markings of the original construction, the orientation-reversing
boundary map satisfies
\[
 \phi_{1*}(x)=\gamma_1',\qquad
 \phi_{1*}(b)=\gamma_2',\qquad
 \phi_{1*}(\lambda_K)=[d,y].
\]
This is a peripheral map in the covering manifolds, not an identification of
quotient boundaries.  It records exactly the information absent from the
braid word.

The following description retains the data visible in the two local
quotients without presupposing a quotient map on the glued boundary.  On the
section-complement side the three sheets of $B_3^{(s)}$ determine the three
nonzero two-torsion labels.  On the fiber-complement side
$B_1^{(f)}\sqcup B_3^{(f)}$ determines all four branch-point labels.  The
map $\phi_1$ in the covering manifolds identifies the peripheral generators
according to
\[
 \begin{array}{c|c|c}
 \text{fiber-sum datum}&P_s&P_f\\ \hline
 \text{first surface circle}&x&\gamma_1'\\
 \text{second surface circle}&b&\gamma_2'\\
 \text{normal circle}&\lambda_K&[d,y].
 \end{array}
\]
Thus the two closed quotient pairs, the specified deletions in their covering
manifolds, and the map $\phi_{1*}$ form the marked gluing data for $Y_K$.
These data retain the braid labels and the normal-circle class, but do not
give an embedded banded branch surface in a common quotient.  Such a surface
would require a boundary diffeomorphism of the quotient manifolds whose lift
is $\phi_1$, which is the compatibility issue studied below.

The unused punctured fiber in $P_s$ and punctured section in $P_f$ join to
form a square-zero genus-two surface $\Sigma_2\subset Y_K$.  After choosing
the standard parallel push-off to $\partial\nu\Sigma_2$, its four surface
generators map into $Y_K\setminus\nu\Sigma_2$ as
\[
 u=a^{-1}b,\qquad v=b^{-1}aba^{-1},\qquad d,\qquad y,
\]
and a meridian is represented, up to basepoint conjugation, by
\[
 \mu_{\Sigma_2}=[x,b].
\]
For these identifications, including the preferred longitude and the
generators $u,v,d,y$, see \cite[Section~4.1, especially
Lemmas~4.3-4.7]{Akh07}.
Thus the description of $Y_K$ combines iterated knot surgery with explicit
braid monodromy and marked gluing data.  It contains the genuine relative cover
\eqref{eq:relative-section-cover}, the ambient quotient pair for the
fiber-complement block, the two peripheral markings, the map $\phi_1$, and
the marked surface $(\Sigma_2;u,v,d,y,\mu_{\Sigma_2})$.  The natural local
actions do not glue: the action on $\partial P_s$ is free, whereas the
fiberwise elliptic action on $\partial P_f$ has four fixed circles.
Consequently $\phi_1$ cannot conjugate these boundary actions.  A global
quotient requires a different relative action on one side and equality of
the restricted covering characters.

\subsection{\texorpdfstring{The twisted double $X_K$}{The twisted double XK}}
\label{subsec:XK-double}

The knot-indexed construction of \cite[Section~4]{Akh07} takes two copies
of $(Y_K,\Sigma_2)$ and glues their genus-two complements by the marked map
\[
 (m',x',\gamma_1',\gamma_2')\longmapsto
 (\gamma_1,\gamma_2,m,x),
 \qquad \mu'\longmapsto\mu^{-1}.
\]
Here $m,m'$ are the section-side knot meridians, $x,x'$ are the external
product-circle generators, and $\gamma_i,\gamma_i'$ are the marked fiber
generators in the notation of \cite{Akh07}.
With the product normal framings, this is a twisted double whose gluing has no
additional meridional shear; it is not the identity double.  The
Mayer--Vietoris calculation and the fiber-sum formulas in \cite{Akh07} give
\[
 H_1(X_K;\mathbb Z)=0,\qquad
 H_2(X_K;\mathbb Z)\cong\mathbb Z^2,
 \qquad e(X_K)=4,\qquad \sigma(X_K)=0.
\]
Denote the two surviving genus-two surfaces by $S$ and $T$.  Their classes
generate the hyperbolic intersection form.  The symplectic sum is symplectic
\cite{Gompf}, and the canonical-class calculation of
\cite[Section~4.2]{Akh07} gives
\[
 K_{X_K}=2\operatorname{PD}[S]+2\operatorname{PD}[T].
\]

For the present paper, $X_K$ is described by four local quotient pairs
$(T^2\times S^2,B_1\sqcup B_3)$ together with the two fiber-section gluing
maps defining the copies of $Y_K$ and the genus-two basis switch above.
These are explicit local branched-cover models with marked gluing data, not
a branched-cover description of the closed manifold.  The obstruction to
the two fiber-section gluings is proved in
Section~\ref{sec:boundary-obstruction}.

\section{Equivariant sums along transverse invariant surfaces}

The original construction forms
\[
 Y_K=(M_K\times S^1)\#_{F=S}(M_K\times S^1).
\]
The construction of $Y_K$ contains a fiber--section sum.  We first record
what happens when both summing surfaces meet the fixed sets transversely.

\begin{proposition}\label{prop:eqsum}
Let $(X_i,\iota_i)$, $i=1,2$, be smooth double branched covers, and suppose
$C_i\subset X_i$ are closed oriented invariant square-zero surfaces meeting
$\operatorname{Fix}(\iota_i)$ transversely.  Choose invariant product
neighborhoods $\nu C_i\cong C_i\times D^2$ in which
$\iota_i(z,w)=(\iota_i|_{C_i}(z),w)$.  Suppose the fiber-sum gluing
\[
 \Phi:\partial\nu C_1\longrightarrow\partial\nu C_2
\]
intertwines these product actions, reverses the oriented normal-circle
coordinate, and matches
the fixed boundary circles, their local intersection signs, the chosen
normal framings of $C_i$, and the local framings of the fixed surfaces at
$C_i\cap\operatorname{Fix}(\iota_i)$.  Then the fiber sum carries a natural involution and its
quotient is obtained by gluing the two quotient complements along the induced
boundary quotient map.  Equivariance also identifies the restricted covering
characters on the two quotient boundaries.  Near each transverse
intersection of a fixed surface with $C_i$, the branch surfaces are joined by
a local band attachment.
\end{proposition}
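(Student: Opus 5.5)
The plan is to build the involution on the fiber sum piece by piece and then read off its quotient. Write the sum as
\[
 X=(X_1\setminus\operatorname{int}\nu C_1)\cup_\Phi(X_2\setminus\operatorname{int}\nu C_2).
\]
Since $C_i$ is invariant, the neighborhood $\nu C_i$ is invariant too. So $\iota_i$ restricts to an involution of the complement $X_i^\circ=X_i\setminus\operatorname{int}\nu C_i$, and on $\partial\nu C_i\cong C_i\times S^1$ it is the product action $(z,w)\mapsto(\iota_i|_{C_i}(z),w)$. By hypothesis $\Phi\circ\iota_1=\iota_2\circ\Phi$ on the boundary. Hence the involution defined as $\iota_1$ on $X_1^\circ$ and $\iota_2$ on $X_2^\circ$ is well defined and continuous on $X$.

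For smoothness along the gluing region, use collars. Choose invariant collars $C_i\times[1-\epsilon,1]\times S^1$ on which the action is the product action in the radial and angular coordinates. Glue them by $\Phi\times(\text{radius reversal})$, which is the standard fiber-sum gluing; it reverses the normal-circle coordinate as required. In these coordinates the two involutions agree, so the glued involution is smooth. Its fixed set is $\operatorname{Fix}(\iota_1)\cap X_1^\circ$ union $\operatorname{Fix}(\iota_2)\cap X_2^\circ$. Because $\Phi$ matches the fixed boundary circles, these pieces meet along common circles and form a closed surface. It is two-dimensional and locally of the form $(z,u)\mapsto(-z,u)$ away from the collar. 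Inside the collar, the fixed set is $(\operatorname{Fix}(\iota_i|_{C_i}))\times[1-\epsilon,1]\times S^1$, a union of product annuli. Matching the local framings of the fixed surfaces ensures that the normal involution data agree across the seam, so the fixed set is smooth there.

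Next, the quotient. The quotient of $X$ is the union of $X_i^\circ/\iota_i$ along $\partial\nu C_i/\iota_i$, identified by the induced map $\bar\Phi$. This map exists because $\Phi$ is equivariant. The orientation and intersection-sign hypotheses make $\bar\Phi$ orientation reversing, so the quotients glue to an oriented manifold. The covering characters are restrictions of $\chi_{X}$ to the two quotient complements, since the cover is a single double cover. Alternatively, one can note directly that equivariant $\Phi$ lifts $\bar\Phi$, and a lift exists only if $\bar\Phi^*\chi_2=\chi_1$ on the boundary complement of the branch circles.

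Finally, the band description. Near a point $p\in C_i\cap\operatorname{Fix}(\iota_i)$, take local coordinates in which $C_i=\{w=0\}$ and the fixed surface is $\{z_1\text{-axis}\}\times D^2_w$, with $\iota(z,w)=(\bar z\text{-reflection},w)$. In the quotient, the branch surface is a disk transverse to the image of $C_i$. Removing $\nu C_i$ cuts it to an annulus boundary. Gluing two such annuli pieces along matched circles joins the branch surfaces across the seam by a band-like product region, which is the asserted local band attachment.

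The main obstacle is smoothness of the glued fixed set and of the quotient at the seam. This is exactly where the matching of normal framings and local fixed-surface framings is used. I would handle it first by putting both sides in the standard linear model near each fixed circle.
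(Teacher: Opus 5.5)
Your route is the paper's. You use invariant product neighborhoods and collars glued by $\Phi$ with the normal reversal, define the involution piecewise, and obtain the quotient by gluing the quotient complements along $\bar\Phi$. You match the characters because $\Phi$ lifts $\bar\Phi$, and you finish with a local model at each point of $C_i\cap\operatorname{Fix}(\iota_i)$.

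The step that fails as written is the band paragraph. There you take the fixed surface to be the $z_1$-axis times $D^2_w$, with $\iota$ acting by the reflection $z\mapsto\bar z$. That involution reverses orientation and has a three-dimensional fixed set, so it is not a branched-cover model. Its quotient is a half-space, not a neighborhood containing a branch disk transverse to the image of $C_i$. The correct model is forced by the hypotheses. $T_pC_i$ is an invariant plane meeting the $(+1)$-eigenspace $T_p\operatorname{Fix}(\iota_i)$ only in $0$, so it is the $(-1)$-eigenspace. This gives $C_i=\{w=0\}$, $\operatorname{Fix}(\iota_i)=\{z=0\}$ and $\iota_i(z,w)=(-z,w)$, which is the form you yourself used two paragraphs earlier. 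With it the paper finishes concretely rather than as a stated intention. The quotient chart is $(z,w)\mapsto(u,w)=(z^2,w)$ and the collar gluing is $w_2=\varepsilon^2/w_1$. The loci $\{u_1=0\}$ and $\{u_2=0\}$ then visibly join into one smooth annulus $\{u=0\}$ over the glued $w$-collar, which is the asserted local attachment.

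A smaller correction concerns the framing hypotheses. They are not what make the fixed set smooth: your collar argument already gives a smooth glued involution, and its fixed set is automatically a smooth submanifold. They are needed for the quotient gluing. Suppose that near a fixed circle $\Phi$ acts on the surface coordinate by the odd linear map $z\mapsto az+b\bar z$. In the charts $u=z^2$ the induced map is $u\mapsto a^2u+2ab|u|+b^2\bar u$, which is not smooth at $u=0$ when $ab\neq0$. Matching the local framings lets you take $\Phi$ complex linear in these normal coordinates. Then $\bar\Phi$ is a diffeomorphism and the quotient complements glue smoothly.
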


\begin{proof}
The equivariant tubular-neighborhood theorem gives the stated invariant
product models, and equivariant isotopy extension allows the marked boundary
identification to be realized on collars.  The equivariant boundary
identification then makes the two actions agree on the union, and taking the
quotient commutes with gluing.  At a transverse
intersection choose oriented complex coordinates with the summing surface
$C_i=\{w=0\}$, the fixed surface $\{z=0\}$, and involution
$(z,w)\mapsto(-z,w)$.  The quotient map is $(z,w)\mapsto(z^2,w)$; matching
these models and the chosen local framings of the fixed surfaces gives a
smooth quotient.  On
$|w_1|=|w_2|=\varepsilon$, the standard collar gluing may be written
$w_2=\varepsilon^2/w_1$.  Thus the fixed annuli glue smoothly, and in the
quotient coordinates $u_i=z_i^2$ the branch loci $u_i=0$ join by the asserted
an annulus joining the corresponding boundary circles of the branch surfaces.
\end{proof}

In the fiber--section sum, however, $F$ meets the fixed set transversely
whereas $S$ is a fixed component, with local model
\[
 (z,w)\longmapsto(z,-w),
\]
whose boundary action is free.  Lemma~\ref{lem:boundary-obstruction} rules
out an equivariant boundary identification, and
Theorem~\ref{thm:homology-product-obstruction} rules out an interpolation
whose lift is an integral homology product.

\section{The boundary obstruction and the mapping-class formulation}
\label{sec:boundary-obstruction}

The natural elliptic involution behaves differently on the two sides of the
fiber-section sum.  This prevents us from interpreting the original gluing as
an equivariant fiber sum and identifies the boundary data that any
alternative quotient construction would have to change.

On a tubular neighborhood
\[
 \nu F=T^2\times D^2
\]
of a torus fiber, the involution has the form
\[
 (z,w)\longmapsto(-z,w).
\]
Hence on
\[
 H_1(\partial\nu F;\mathbb Z)
 =
 \langle\gamma_1,\gamma_2,\mu_F\rangle
\]
it acts by
\[
 I_F=
 \begin{pmatrix}
 -1&0&0\\
 0&-1&0\\
 0&0&1
 \end{pmatrix}.
\]
Its fixed set on the boundary consists of four circles.

For a fixed section $T_m$, the local model is
\[
 (z,w)\longmapsto(z,-w).
\]
The induced action on the boundary is free, since $w\mapsto-w$ is a half
rotation of the normal circle.  Its action on first homology is the identity.
Thus the two boundary actions are not conjugate.

\begin{lemma}\label{lem:boundary-obstruction}
The natural elliptic involutions on a fiber neighborhood and a fixed-section
neighborhood cannot be identified by an equivariant boundary
diffeomorphism.
\end{lemma}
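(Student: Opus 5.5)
The plan is to compare two conjugacy invariants of the boundary actions: the fixed-point set and the induced action on first homology. Suppose $\Phi:\partial\nu F\to\partial\nu T_m$ is a diffeomorphism with $\Phi\circ\iota_F=\iota_{T_m}\circ\Phi$, where $\iota_F$ and $\iota_{T_m}$ denote the restrictions of the natural elliptic involutions to the two boundary three-tori. We take both local models in the invariant product neighborhoods fixed above, namely $\iota_F(z,w)=(-z,w)$ on $T^2\times S^1$ and $\iota_{T_m}(z,w)=(z,-w)$ on $T_m\times S^1$.

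First, an equivariant diffeomorphism maps $\operatorname{Fix}(\iota_F)$ bijectively onto $\operatorname{Fix}(\iota_{T_m})$. On $\partial\nu F$ the fixed set is $T^2[2]\times S^1$, which consists of four circles. This uses only that $-z=z$ exactly at the four two-torsion points of the torus fiber, as recorded in the description of Theorem~\ref{thm:main-local} and Proposition~\ref{prop:split}. On $\partial\nu T_m$, the map $w\mapsto-w$ on the unit normal circle has no fixed points, so the action is free. A nonempty set cannot correspond bijectively to the empty set, and this already proves the lemma.

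For robustness I would also give the homological version, since it survives replacing $\Phi$ by any map isotopic to it. Equivariance gives $\Phi_*\circ I_F=I_{T_m}\circ\Phi_*$ on $H_1(-;\mathbb Z)\cong\mathbb Z^3$. Here $I_{T_m}=\id$, because a half rotation of the normal circle is isotopic to the identity. The relation therefore forces $I_F=\Phi_*^{-1}\Phi_*=\id$. This contradicts $I_F=\operatorname{diag}(-1,-1,1)$, which has trace $-1\neq3$.

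The only real obstacle is making sure the lemma is stated for the natural actions in these fixed product models, and not for arbitrary involutions on abstract three-tori. If "equivariant boundary diffeomorphism" is read up to isotopy or conjugation by marked maps, the homology argument is the one to quote. Conjugation preserves both the trace and the fixed-set count, so neither invariant depends on the chosen framing or on orientation reversal of $\Phi$. Reversing orientation only changes $\Phi_*$ by a sign on the normal coordinate, which does not affect the trace comparison.
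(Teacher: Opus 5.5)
Your proposal is correct and matches the paper's proof: the main argument compares fixed sets (the four circles $T^2[2]\times S^1$ against a free action), and the backup argument compares the induced actions $\operatorname{diag}(-1,-1,1)$ and $I_3$ on $H_1(T^3;\mathbb Z)$. Your aside that orientation reversal changes $\Phi_*$ only by a sign on the normal coordinate is imprecise, but nothing depends on it: $I_{T_m}=\mathrm{id}$ forces $I_F=\mathrm{id}$ for every $\Phi_*\in GL(3,\mathbb Z)$.
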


\begin{proof}
The fixed set of the fiber-boundary action is four circles, whereas the
section-boundary action is free.  Thus the actions cannot be equivariantly
diffeomorphic.  The same conclusion follows on first homology, since the
fiber-boundary action has eigenvalues $(-1,-1,1)$ while the section-boundary
action is the identity.
\end{proof}

The quotient boundaries are correspondingly different:
\[
 \partial\nu F/\iota_F\cong S^2\times S^1,
 \qquad
 \partial\nu T_m/\iota_S\cong T^2\times S^1.
\]
The first quotient carries four branch circles, while the second boundary is
unbranched.  Any interpolation in the quotient would therefore require a
branched cobordism rather than an equivariant fiber sum.

\begin{definition}
A \emph{fiber-section branch cobordism} is a triple $(C,\mathcal A,q)$,
where $C$ is a compact oriented four-manifold from $S^2\times S^1$ to
$T^2\times S^1$, $\mathcal A\subset C$ is a properly embedded surface
transverse to the boundary and product near the boundary, and
\[
 \mathcal A\cap\partial_-C=\bigsqcup_{i=1}^{4}S_i^1,
 \qquad
 \mathcal A\cap\partial_+C=\varnothing.
\]
$q:\widetilde C\to C$ is a specified smooth double cover branched along
$\mathcal A$.  Fix boundary parameterizations by the displayed products.
The cover is required to have
boundary $T^3\sqcup T^3$ and to restrict, relative to these markings, to the
elliptic fiber cover on the incoming boundary and the free section-boundary
cover on the outgoing boundary.
\end{definition}

Let $m_1,\ldots,m_4$ be small meridians of the branch circles and let $n$ be
the product circle on the fiber quotient boundary.  The incoming character
\[
 \chi_F:\pi_1\left((S^2\times S^1)\setminus
 \nu\left(\bigsqcup_{i=1}^4S_i^1\right)\right)\longrightarrow\mathbb Z/2
\]
satisfies
\[
 \chi_F(m_i)=1,\qquad \chi_F(n)=0.
\]
In
$H_1((S^2\times S^1)\setminus\nu(\bigsqcup S_i^1);\mathbb Z/2)$ one has
$m_1+\cdots+m_4=0$, so these values are consistent.  On the section quotient boundary, writing $a,b$ for the torus
generators and $n_S$ for the quotient normal circle, the free cover has a
character $\chi_S:\pi_1(T^2\times S^1)\to\mathbb Z/2$ satisfying
\[
 \chi_S(a)=\chi_S(b)=0,\qquad \chi_S(n_S)=1.
\]
The branched cover determines a character
$\chi_C:\pi_1(C\setminus\mathcal A)\to\mathbb Z/2$, taking value one on
every branch meridian, whose restrictions under the chosen boundary
parameterizations are $\chi_F$ and $\chi_S$.
Along $\mathcal A$ it is required to have the standard meridional local
branch model.

Call $\widetilde C$ an \emph{integral homology product} if both boundary
inclusions induce isomorphisms on integral homology.

\begin{theorem}\label{thm:homology-product-obstruction}
Let $(C,\mathcal A,q)$ be a fiber-section branch cobordism, with deck
transformation $\tau$.  The two boundary inclusions $T^3\to\widetilde C$
cannot both induce isomorphisms on $H_1(-;\mathbb Z)$.  In particular,
$\widetilde C$ is neither an integral homology product nor a product
cobordism $T^3\times[0,1]$ relative to its boundary.
\end{theorem}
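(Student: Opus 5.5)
The plan is to compare the actions of the deck transformation $\tau$ on first homology at the two ends of $\widetilde C$. If both boundary inclusions are isomorphisms, they give a $\tau$-equivariant isomorphism between the two ends. The boundary actions computed before Lemma~\ref{lem:boundary-obstruction} are not conjugate, so this is impossible.

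\textbf{Step 1: each end is $\tau$-invariant.} Write $\partial\widetilde C=\widetilde\partial_-\sqcup\widetilde\partial_+$ with $\widetilde\partial_\pm=q^{-1}(\partial_\pm C)$. Each $\widetilde\partial_\pm$ is a single $T^3$, so $\tau$ preserves each one. Hence the inclusions $j_\pm:\widetilde\partial_\pm\to\widetilde C$ satisfy $j_{\pm}\circ\tau|=\tau\circ j_\pm$, and the same holds for the induced maps on $H_1(-;\mathbb Z)$.

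\textbf{Step 2: the two boundary actions.} By the definition of a fiber-section branch cobordism, with the fixed boundary parameterizations:
\begin{itemize}[nosep]
\item $\tau|_{\widetilde\partial_-}$ is the elliptic fiber involution $(z,w)\mapsto(-z,w)$ on $\partial\nu F$. It acts on $H_1(\widetilde\partial_-)\cong\mathbb Z^3$ by $I_F=\operatorname{diag}(-1,-1,1)$.
\item $\tau|_{\widetilde\partial_+}$ is the free section-boundary involution $w\mapsto-w$. It is isotopic to the identity, being a half rotation of the normal circle, so it acts on $H_1(\widetilde\partial_+)$ by the identity.
\end{itemize}
These are exactly the computations recorded before Lemma~\ref{lem:boundary-obstruction}. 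The characters $\chi_F,\chi_S$ only confirm that these are the covers in question.

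\textbf{Step 3: the contradiction.} Suppose both $j_{-*}$ and $j_{+*}$ are isomorphisms on $H_1(-;\mathbb Z)$. Then
\[
\Psi=j_{+*}^{-1}\circ j_{-*}:H_1(\widetilde\partial_-)\to H_1(\widetilde\partial_+)
\]
is an isomorphism of $\mathbb Z^3$, and by Step 1 it satisfies $\Psi\circ I_F=\id\circ\Psi$. Consequently $I_F=\Psi^{-1}\Psi=\id$, which contradicts $I_F\neq\id$. Concretely, take $\gamma_1\neq0$: then $\Psi(\gamma_1)=\Psi(-\gamma_1)$ forces $2\Psi(\gamma_1)=0$ in the torsion-free group $\mathbb Z^3$, so $\gamma_1=0$, which is false.

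\textbf{Step 4: the final assertions.} An integral homology product has both inclusions isomorphisms on $H_1$ in particular, so it is excluded by Step 3. A product cobordism $T^3\times[0,1]$ relative to its boundary has both inclusions homotopy equivalences, so it is excluded as well.

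The only delicate point is Step 2: one must check that the prescribed boundary markings really identify $\tau|$ on each end with the stated local models. This includes checking that the deck transformation of the outgoing free double cover is the half rotation, and not some other free involution of $T^3$. That identification is built into the definition, so no real obstacle is expected.
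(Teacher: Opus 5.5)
Your proposal is correct and follows the paper's proof essentially verbatim: both deduce from $j_{\pm *}$ being isomorphisms that $\Psi=j_{+*}^{-1}j_{-*}$ conjugates $\operatorname{diag}(-1,-1,1)$ to $I_3$, which is impossible (the paper compares traces $-1\neq 3$, while you note that $\Psi I_F=\Psi$ forces $I_F=\id$). One small correction to your Step~1: $\tau$ preserves each end because $q\circ\tau=q$ and $\widetilde\partial_\pm=q^{-1}(\partial_\pm C)$, not because each end is a single $T^3$.
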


\begin{proof}
Suppose that the two boundary inclusions $j_-$ and $j_+$ induce isomorphisms
on $H_1$.  Equivariance gives
\[
 (j_\pm)_*\tau_{\pm *}=\tau_*(j_\pm)_*.
\]
The isomorphism
$\Phi=(j_+)_*^{-1}(j_-)_*$ then satisfies
$\Phi\tau_{-*}=\tau_{+*}\Phi$, so the two boundary actions are conjugate.  At the
fiber end, in the basis $(\gamma_1,\gamma_2,\mu_F)$, the action is
\[
 \tau_{-*}=\operatorname{diag}(-1,-1,1).
\]
At the fixed-section end, the deck transformation is a half-translation in
the normal circle and acts trivially on $H_1(T^3)$, so
\[
 \tau_{+*}=I_3.
\]
These matrices are not conjugate over $\mathbb Z$; their traces are $-1$ and
$3$.  This is a contradiction.
\end{proof}

The long exact sequences of the boundary pairs show more precisely that
$H_k(\widetilde C,\partial_\varepsilon\widetilde C;\mathbb Z)\ne0$ for some
$\varepsilon\in\{-,+\}$ and $k\in\{1,2\}$.

\begin{corollary}\label{cor:no-collar-replacement}
There is no fiber-section branch cobordism whose specified double cover is a
collar $T^3\times[0,1]$ relative to the prescribed boundary markings.
\end{corollary}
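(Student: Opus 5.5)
The corollary should follow directly from Theorem~\ref{thm:homology-product-obstruction}. I will argue by contradiction. Suppose $(C,\mathcal A,q)$ is a fiber-section branch cobordism whose double cover $\widetilde C$ is identified with a collar $T^3\times[0,1]$ relative to the prescribed boundary markings. In other words, there is a diffeomorphism $\widetilde C\cong T^3\times[0,1]$ that restricts on $\partial_-\widetilde C$ and $\partial_+\widetilde C$ to the given parameterizations of $T^3\times\{0\}$ and $T^3\times\{1\}$.

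For a product cobordism, each boundary inclusion $T^3\times\{\varepsilon\}\hookrightarrow T^3\times[0,1]$ is a homotopy equivalence. Hence $(j_-)_*$ and $(j_+)_*$ are isomorphisms on $H_1(-;\mathbb Z)$, and in fact on all integral homology. This is exactly what Theorem~\ref{thm:homology-product-obstruction} forbids, so the contradiction is immediate.

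For robustness, I would also spell out the conjugation step concretely in this case. The composite $\Phi=(j_+)_*^{-1}(j_-)_*$ is then the identity of $H_1(T^3)$ under the marked identifications. Equivariance of the deck transformation $\tau$ would force $\operatorname{diag}(-1,-1,1)=I_3$, which fails by comparing traces ($-1$ versus $3$).

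The only point I expect to need care is the phrase "relative to the prescribed boundary markings". I need the collar identification to be compatible with the boundary markings up to isotopy, so that the boundary inclusions are the standard ones. The argument, however, only uses that both inclusions are $H_1$-isomorphisms, and that holds for any diffeomorphism to a product. So the marking compatibility is not actually required, and the corollary follows without further input.
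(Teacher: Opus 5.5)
Your proposal is correct and follows the paper's proof: the paper likewise observes that both boundary inclusions of a collar $T^3\times[0,1]$ are homotopy equivalences, hence $H_1$-isomorphisms, which contradicts Theorem~\ref{thm:homology-product-obstruction}. Your extra trace check and your remark that the marking compatibility is not actually needed are consistent with that argument but not required.
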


\begin{proof}
Both boundary inclusions of a collar $T^3\times[0,1]$ are homotopy
equivalences, contradicting
Theorem~\ref{thm:homology-product-obstruction}.
\end{proof}

The original construction determines the marking which this cobordism must
realize.  With the notation of the trefoil calculation, the boundary
identifications include
\[
 \phi_{1*}(x)=\gamma_1',
 \qquad
 \phi_{1*}(b)=\gamma_2',
\]
and the remaining peripheral circle identifies the knot longitude with the
negatively oriented meridian $\mu_F=[d,y]$ of the removed torus fiber.  Thus
the desired lifted marking, with the orientation convention fixed above, is
\[
 (b,x,\lambda_K)
 \longmapsto
 (\gamma_2',\gamma_1',\mu_F).
\]

The braid words of Section~\ref{sec:piecewise-braids} record the branch-point
motion along the mapping-torus
generator.  The monodromy along the second base circle is trivial, and the
word contains no information about the normal-circle class.  More importantly,
Theorem~\ref{thm:homology-product-obstruction} shows that even a prescribed
identification
\[
 (b,x,\lambda_K)\longmapsto(\gamma_2',\gamma_1',\mu_F)
\]
cannot turn the lifted cobordism into the collar removed in the original
fiber-section sum.  Any viable replacement must have nonzero relative
homology at least at one end, and this topology must be accounted for in the
closed manifold.  Thus
the preceding discussion supplies necessary boundary constraints, not an
existence theorem for a quotient of $Y_K$.

\section{The Matsumoto branch configuration}
\label{sec:matsumoto}

For the $X_5$ construction, choose distinct
points $p_1,p_2\in T^2$ and $q_1,q_2\in S^2$, and put
\[
 H_i=S^2\times\{p_i\},\qquad V_j=\{q_j\}\times T^2,
 \qquad i,j\in\{1,2\}.
\]
Let
\[
 A=[S^2\times\{\mathrm{pt}\}],\qquad
 C=[\{\mathrm{pt}\}\times T^2].
\]
Thus $A^2=C^2=0$ and $A\cdot C=1$.  The divisor
\[
D=H_1\cup H_2\cup V_1\cup V_2
\]
has exactly four transverse double points $(q_j,p_i)$.

\begin{figure}[H]
\centering
\begin{tikzpicture}[>=Latex,font=\scriptsize]
 \node[rounded corners=7pt,fill=white,draw=branchblue!25,
   blur shadow={shadow opacity=.17,shadow blur steps=5},
   minimum width=5.75cm,minimum height=3.55cm] at (0,0) {};
 \node[rounded corners=7pt,fill=white,draw=turquoise!30,
   blur shadow={shadow opacity=.17,shadow blur steps=5},
   minimum width=5.05cm,minimum height=3.55cm] at (7.2,0) {};
 \filldraw[rounded corners=5pt,fill=branchblue!4,draw=branchblue!45,
   line width=.9pt] (-2.6,-1.35) rectangle (2.6,1.35);
 \node[text=branchblue!80!black] at (0,1.22)
   {$S^2\times T^2$};
 \draw[branchred,line width=2.15pt] (-2.25,.62)--(2.25,.62)
   node[right,text=branchred] {$H_1$};
 \draw[branchblue,line width=2.15pt] (-2.25,-.62)--(2.25,-.62)
   node[right,text=branchblue] {$H_2$};
 \draw[gluegreen,line width=2.15pt] (-.82,-1.12)--(-.82,1.12)
   node[above,text=gluegreen] {$V_1$};
 \draw[turquoise,line width=2.15pt] (.82,-1.12)--(.82,1.12)
   node[above,text=turquoise] {$V_2$};
 \foreach \x in {-.82,.82}{\foreach \y in {-.62,.62}{
   \filldraw[fill=warmorange!85,draw=blockpurple,line width=.9pt]
     (\x,\y) circle (.17);
 }}
 \draw[-{Latex[length=2.8mm]},blockpurple,line width=1.45pt]
   (3.15,0)--node[above,fill=white,inner sep=1pt,text=blockpurple!80!black]
   {blow up} (4.35,0);
 \begin{scope}[xshift=7.2cm]
  \filldraw[rounded corners=5pt,fill=turquoise!5,draw=turquoise!50,
    line width=.9pt] (-2.25,-1.35) rectangle (2.25,1.35);
  \draw[branchred,line width=2.15pt] (-1.9,.72)--(-.25,.72);
  \draw[branchred,line width=2.15pt] (.25,.72)--(1.9,.72)
    node[right,text=branchred] {$\widetilde H_1$};
  \draw[branchblue,line width=2.15pt] (-1.9,-.72)--(-.25,-.72);
  \draw[branchblue,line width=2.15pt] (.25,-.72)--(1.9,-.72)
    node[right,text=branchblue] {$\widetilde H_2$};
  \draw[gluegreen,line width=2.15pt] (-.72,-1.12)--(-.72,-.96);
  \draw[gluegreen,line width=2.15pt] (-.72,-.48)--(-.72,.48);
  \draw[gluegreen,line width=2.15pt] (-.72,.96)--(-.72,1.12);
  \draw[turquoise,line width=2.15pt] (.72,-1.12)--(.72,-.96);
  \draw[turquoise,line width=2.15pt] (.72,-.48)--(.72,.48);
  \draw[turquoise,line width=2.15pt] (.72,.96)--(.72,1.12);
  \foreach \x in {-.72,.72}{\foreach \y in {-.72,.72}{
    \filldraw[fill=blockpurple!22,draw=blockpurple,line width=1.05pt]
      (\x,\y) circle (.24);
  }}
  \node[text=blockpurple] at (0,-1.58) {exceptional curves $E_{ij}$};
 \end{scope}
\end{tikzpicture}
\caption{The Matsumoto branch divisor before and after the four blowups.
Red and blue denote the horizontal components, green and turquoise the
vertical components, and purple the exceptional curves.  The proper
transforms are disjoint.  The picture is schematic; incidences of each
exceptional curve with the adjacent proper transforms are suppressed.}
\label{fig:matsumoto-divisor}
\end{figure}

Let
\[
 \rho:W=(S^2\times T^2)\#4\CPb^2\longrightarrow S^2\times T^2
\]
be the blowup at these points and let $E_{ij}$ be the exceptional curves.
The proper transforms have classes
\[
 [\widetilde H_i]=A-E_{i1}-E_{i2},\qquad
 [\widetilde V_j]=C-E_{1j}-E_{2j}.
\]
They are pairwise disjoint and each has square $-2$.  Their union
\[
 B=\widetilde H_1\sqcup\widetilde H_2\sqcup
   \widetilde V_1\sqcup\widetilde V_2
\]
satisfies
\[
 [B]=2A+2C-2\sum_{i,j=1}^2E_{ij}.
\]
Put
\[
 \ell=\operatorname{PD}\left(A+C-\sum_{i,j=1}^2E_{ij}\right)
 \in H^2(W;\mathbb Z).
\]
Then $\operatorname{PD}[B]=2\ell$.  Let $E_B\to W$ be the complex line
bundle associated with the oriented codimension-two submanifold $B$, so
$c_1(E_B)=\operatorname{PD}[B]$, and choose a transverse section $s_B$ of
$E_B$ whose zero set is $B$.  Choose a complex line bundle
$\mathcal L\to W$ with $c_1(\mathcal L)=\ell$ and an isomorphism
$\mathcal L^{\otimes2}\cong E_B$.  The associated branched cover is
\[
 Z=\{v\in\mathcal L\mid v^{\otimes2}=s_B(\pi(v))\}.
\]
Once $c_1(\mathcal L)=\ell$ is fixed, the remaining double-cover data are
encoded by the covering character on $W\setminus B$; possible characters
differ by classes pulled back from $H^1(W;\mathbb Z/2)$.  We choose the
character induced by the classical Matsumoto involution.  The standard
square-root construction gives the
smooth double cover
\[
 p:Z\longrightarrow W
\]
branched along $B$.  This is a resolved double-cover model on the unmarked
total space of the Matsumoto fibration, and
\[
 Z\cong (T^2\times S^2)\#4\CPb^2
\]
\cite{Matsumoto,AkhmedovSmall}.  Near a double point of $D$, the unresolved
cover has equation $z^2=uv$.  Blowing up the crossing in the quotient and taking
the cover branched along the disjoint proper transforms gives the minimal
resolution of this $A_1$ singularity.

\subsection{The covering character and the four resolutions}

Let $U=W\setminus B$.  Choose the $\mathbb Z/2$ monodromy homomorphism
associated with the Matsumoto double cover,
\[
 \chi_{\rm Mat}:\pi_1(U)\longrightarrow\mathbb Z/2
\]
which takes a positively oriented meridian of every component of $B$ to
$1$.  Its existence is supplied by the classical Matsumoto involution.
Thus the unbranched cover of $U$ determined by
$\ker\chi_{\rm Mat}$ extends uniquely, once $\chi_{\rm Mat}$ is fixed,
across the meridional double covers of
the normal disks.  This gives the specified branched cover $p:Z\to W$.
The choice of character is part of the marked quotient data; the homology
class of $B$ alone does not specify it when the base has nontrivial first
cohomology.

We compare the unresolved and resolved local models.  Near a
crossing of $H_i$ and $V_j$, choose complex coordinates so that the two
components are $\{u=0\}$ and $\{v=0\}$.  The double cover is
\[
 z^2=uv.
\]
The branch divisor has a transverse node, and its double cover has the
$A_1$ singularity $z^2=uv$.  Blow up the origin in the quotient.  In the chart
$v=ut$, the total transform of the branch divisor is
$u^2t=0$.  The reduced branch divisor is the strict transform $t=0$; the
exceptional curve $u=0$ occurs with even multiplicity and is therefore not
a component of the reduced branch divisor.  After removing this even
exceptional multiplicity, the normalized
cover has local equation $z_1^2=t$, where $z=uz_1$.  The second chart is
identical.  Hence the normalized cover is smooth, and the inverse image of
the exceptional curve is a sphere meeting the two ramification components.
Repeating this at the four crossings produces the four spheres $Q_{ij}$
described below.

The global class calculation is compatible with this local normalization.
If
\[
 L=A+C-\sum_{i,j}E_{ij},
\]
then $[B]=2L$.  Since
\[
 A^2=C^2=0,\qquad A\cdot C=1,\qquad E_{ij}^2=-1,
\]
one obtains
\[
 L^2=2-4=-2,\qquad B^2=4L^2=-8.
\]
\subsection{Two quotient viewpoints on the genus-two fiber}

The map $f=\operatorname{pr}_1\circ\rho\circ p$ is the resolved,
unperturbed double-cover map on the Matsumoto total space.
Along a vertical ramification component its local form is a branched map
$u=z^2$, so it is not yet a Lefschetz fibration.  The four nodes belong to
the unresolved divisor in $S^2\times T^2$; after blowing up, the proper
transforms in $W$ are disjoint.  Smoothing the unresolved nodal divisor and
normalizing its cover is distinct from perturbing the resolved map near its
vertical ramification.  The standard local comparison gives the usual
Matsumoto Lefschetz fibration on a diffeomorphic unmarked total space
\cite{Matsumoto,AkhmedovSmall}.  No marked-complement identification is used
below.  Over a regular value $q$, the fiber
of the unperturbed model is a double cover
of $T^2$ branched at the two points $p_1,p_2$.  The deck involution of this
cover will be denoted by $\tau_{\rm bi}$.  Its quotient is a torus and its
fixed set on the genus-two fiber consists of two points.

The same genus-two surface also has its hyperelliptic involution
$\tau_{\rm hyp}$, whose quotient is $S^2$ and whose fixed set consists of six
points.  These involutions play different roles.  The divisor picture in
Figure~\ref{fig:matsumoto-divisor} realizes $\tau_{\rm bi}$ and is adapted to
the horizontal and vertical components of $B$.  A possible extension across
the genus-two sum in Section~\ref{sec:global-problem} would instead use
$\tau_{\rm hyp}$ on both sides of the sum.  The two quotient pictures should
not be identified: their quotient fibers and boundary branch links are
different.

On a boundary $\Sigma_2'\times S^1$ of a regular fiber neighborhood, the
bielliptic quotient is $T^2\times S^1$ with two branch circles.  The
hyperelliptic quotient is $S^2\times S^1$ with six branch circles.  In either
case the product involution acts trivially on the normal-circle factor, but
the surface part of a gluing map and any normal-circle shear must be checked
separately.  The six-circle picture is a local quotient of the marked fiber
boundary; it is not the restriction of the established cover $p:Z\to W$,
whose fiberwise deck action is bielliptic.

The branched-cover formulas give
\[
 e(W)=4,\qquad \sigma(W)=-4,\qquad
 e(B)=2+2+0+0=4,\qquad B^2=-8.
\]
Consequently
\[
 e(Z)=2e(W)-e(B)=4,\qquad
 \sigma(Z)=2\sigma(W)-\frac12B^2=-4,
\]
as required for $(T^2\times S^2)\#4\CPb^2$.

The curves needed below are as follows.  Put
\[
 Q_{ij}=p^{-1}(E_{ij}).
\]
Since $E_{ij}$ meets $B$ twice, $Q_{ij}\to E_{ij}$ is the connected double
cover of $S^2$ branched at two points.  Hence $Q_{ij}$ is a sphere and
\[
 p^*E_{ij}=Q_{ij},\qquad
 Q_{ij}^2=(p^*E_{ij})^2=2E_{ij}^2=-2.
\]
It is not a ramification component and is not an exceptional sphere.
Second, if $S_i$ and $T_j$ denote the reduced inverse images of
$\widetilde H_i$ and $\widetilde V_j$, then
\[
 p^*\widetilde H_i=2S_i,\qquad p^*\widetilde V_j=2T_j,
 \qquad S_i^2=T_j^2=-1.
\]
Here $S_i$ is a sphere and an exceptional section of the genus-two
fibration, whereas $T_j$ is a genus-one curve contained in a singular fiber
of the unperturbed resolved model.
Thus $T_j$ has square $-1$ but is not an exceptional sphere.

For the unperturbed branched model, consider
\[
 f=\operatorname{pr}_1\circ\rho\circ p:Z\longrightarrow S^2.
\]
Relative to the projection $\operatorname{pr}_1$, the components $H_1,H_2$
are sections and $V_1,V_2$ are vertical fibers.  Thus $D$ is a braided
branch configuration consisting of two horizontal sheets and two vertical
components.  This is not a degree-four braided
multisection: the vertical components lie over the two exceptional base
values $q_1,q_2$.  The blowups resolve the four crossings, and the marked
pair $(W,B)$ retains this horizontal-vertical decomposition.
For $q\notin\{q_1,q_2\}$, the quotient fiber $\{q\}\times T^2$ meets the
branch divisor at $(q,p_1)$ and $(q,p_2)$.  Its inverse image
$\Sigma_2'$ is the double cover of $T^2$ branched at these two points.
Riemann-Hurwitz gives
\[
 2g(\Sigma_2')-2=2\bigl(2g(T^2)-2\bigr)+2=2,
\]
so $g(\Sigma_2')=2$.  The curves $S_1$ and $S_2$ meet every regular fiber
once and give two disjoint exceptional sections.

On the boundary of a regular fiber neighborhood, the covering is
\[
 (\Sigma_2'\times S^1,\tau_{\Sigma}\times\id)
 \longrightarrow T^2\times S^1,
\]
where $\tau_{\Sigma}=\tau_{\rm bi}$ is the deck involution of
$\Sigma_2'\to T^2$.  It has two fixed points, so the fixed set on the
boundary consists of two circles $L_1\sqcup L_2$.  If $m_1,m_2$ are the
corresponding branch meridians and $n$ is the normal-circle class, the
covering character
\[
 \chi:\pi_1\left((T^2\times S^1)\setminus
 \nu(L_1\sqcup L_2)\right)\longrightarrow\mathbb Z/2
\]
satisfies, writing $A_0,B_0$ for marked generators of the quotient torus,
\[
 \chi(A_0)=\varepsilon_A,\qquad \chi(B_0)=\varepsilon_B,\qquad
 \chi(m_1)=\chi(m_2)=1,\qquad \chi(n)=0
 \quad\text{in }\mathbb Z/2.
\]
Here $(\varepsilon_A,\varepsilon_B)$ is part of the chosen Matsumoto
covering character; the three meridional and normal-circle values alone do
not determine the cover.
A genus-two boundary gluing descends only if it conjugates the deck actions
and preserves the branch circles, the covering character, and the
normal-circle class.  Agreement on first homology alone does not imply these
marked conditions.

The complement calculation used in Section~\ref{sec:x5} is
A punctured exceptional section is a disk bounded by a meridian of
$\Sigma_2'$, so that meridian is nullhomotopic in
$Z\setminus\nu\Sigma_2'$.  With standard generators
$a_1,b_1,a_2,b_2$ of a regular fiber,
\[
 \pi_1(Z\setminus\nu\Sigma_2')=
 \left\langle a_1,b_1,a_2,b_2\ \middle|\
 b_1b_2,\ [a_1,b_1],\ [a_2,b_2],\
 b_2a_2b_2^{-1}a_1
 \right\rangle
 \cong\mathbb Z^2
\]
\cite[Sections~3 and~5]{AkhmedovSmall}.  Indeed, $b_2=b_1^{-1}$, the last
relation expresses $a_1$ in terms of $a_2,b_1$, and the commutator relations
give $[a_2,b_1]=1$ and $a_1=a_2^{-1}$.  Thus $a_1a_2=1$ is derived rather
than imposed.  These relations and the trivial meridian are the
Matsumoto-side input to the van Kampen calculation below.

\section{\texorpdfstring{The $\CP^2\#5\CPb^2$ example}{A manifold homeomorphic to CP2\#5CPbar2}}
\label{sec:x5}

For this section $K$ is the right-handed trefoil.  This is the case for which
the marked complement presentations were computed in
\cite[Sections~3 and~5]{AkhmedovSmall}.  Let
\[
 Z=(T^2\times S^2)\#4\CPb^2
\]
and let $\Sigma_2'\subset Z$ be the regular Matsumoto fiber described in
Section~\ref{sec:matsumoto}.  The first presentation is
\[
 X_5=Y_K\#_{\Sigma_2=\Sigma_2'}Z.
\]
This is the presentation in \cite{AkhmedovSmall}.  We also discuss the
symmetric construction scheme of \cite[Section~3.3]{ABP}.  We denote its
output by $X_5^{\rm sym}$.  The cited sources show that $X_5$ and
$X_5^{\rm sym}$ have the same homeomorphism type and the same numerical
invariants, but do not identify them diffeomorphically.

\subsection{The symmetric twisted double}

After moving the four Luttinger surgeries into the two summands, as in the
Appendix of \cite{ABP}, the symmetric construction of
\cite[Section~3.3]{ABP} can be viewed as using the genus-two surface obtained
by smoothing a fiber $F$ and a section $S$ in $M_K\times S^1$ and blowing
up two points:
\[
 V_K=(M_K\times S^1)\#2\CPb^2,\qquad
 \Gamma_K=[F]+[S]-E_1-E_2,\qquad \Gamma_K^2=0.
\]
It expresses $X_5^{\rm sym}$ as the twisted double of
$V_K\setminus\nu\Gamma_K$ by the basis switch
\[
 (a,b,c,d)\longmapsto(c',d',a',b'),
 \qquad \mu\longmapsto(\mu')^{-1},
\]
 with the product-framed gluing and no additional meridional shear.
Independently, equivariantly blowing up a free orbit of two points gives a
elliptic involution acts freely gives a branched cover
\[
 V_K\longrightarrow(T^2\times S^2)\#\CPb^2
\]
with the same branch surface $B_1\sqcup B_3$.  It is not a cover of the
marked pair: near $F\cap S$, the involution
$(u,v)\mapsto(-u,v)$ exchanges the smoothings
$uv=\varepsilon$ and $uv=-\varepsilon$.

\begin{proposition}\label{prop:x5-abp}
Each of the two summands in the twisted-double construction of
$X_5^{\rm sym}$ admits a local branched-cover model, but the natural
elliptic involution does not preserve the connected smoothing defining
$\Gamma_K$.  Consequently, the natural local involutions do not assemble
through this marked twisted-double construction.
\end{proposition}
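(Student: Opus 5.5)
The proof splits into three parts: an existence statement for each summand, a local non-invariance computation at the crossing $F\cap S$, and a deduction about assembling.

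\textbf{Existence of local models.} By Proposition~\ref{prop:MKcover} and Proposition~\ref{prop:split}, the fiberwise elliptic involution exhibits $M_K\times S^1$ as a double cover of $T^2\times S^2$ branched along $B_1\sqcup B_3$. To obtain $V_K=(M_K\times S^1)\#2\CPb^2$, I would blow up equivariantly at a free orbit $\{p,\iota(p)\}$, where $p$ lies off the fixed set. The two blowups are exchanged by the involution. The quotient is therefore the blowup of $T^2\times S^2$ at the single image point, giving a branched cover $V_K\to(T^2\times S^2)\#\CPb^2$ with the same branch surface. Removing an invariant neighborhood of any invariant surface then yields a local model for each summand in the sense of Definition~\ref{def:marked-block}. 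The point requiring care is that the blowup points must be the ones used to define $\Gamma_K$. I would choose them as a free orbit on $F\cup S$ away from $F\cap S$. Alternatively, since only the existence of a local model is claimed, the model need not be adapted to $\Gamma_K$.

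\textbf{Non-invariance of the smoothing.} Work at the unique transverse point $F\cap S$. Take $S$ to be the zero section, so it is fixed pointwise, and take $F$ to be a fiber, so it is invariant and meets $S$ transversely at the origin of the fiber torus. Choose local complex coordinates $(u,v)$ with $F=\{v=0\}$, $S=\{u=0\}$, $u$ the fiber coordinate and $v$ the base coordinate. In these coordinates the involution is $(u,v)\mapsto(-u,v)$, since it acts as $-1$ on the fiber and trivially on the base directions. The connected smoothing defining $\Gamma_K$ replaces $uv=0$ near the origin by $uv=\varepsilon$ for a fixed $\varepsilon\neq0$, with orientation compatible with $[F]+[S]$. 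The involution carries this to $uv=-\varepsilon$, which is a different smoothing that is disjoint from the first near the origin. The smoothed surface agrees with $F\cup S$ outside the neighborhood, so it cannot be invariant: its image differs from it inside the neighborhood and is not isotopic to it rel the prescribed boundary collars.

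The main obstacle is to rule out an invariant isotopic replacement. One could try to choose an invariant smoothing among the $\varepsilon e^{i\theta}$-family, but the involution acts on this family by $\theta\mapsto\theta+\pi$ and has no fixed member. So no smoothing of the form $uv=c$ with the given collars is invariant. I would state the claim for this chosen connected smoothing with its collars, as in the statement, and not attempt an argument over all surfaces in the class.

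\textbf{Failure to assemble.} The twisted-double gluing is defined on the complement of $\nu\Gamma_K$. By Remark~\ref{rem:globalization-criterion}, the local involutions extend only if the smoothing is equivariant and the gluing is isotopic to an equivariant map. The smoothing step has just been shown to fail, and the natural involution does not preserve $V_K\setminus\nu\Gamma_K$. Hence the natural local involutions do not assemble through this marked twisted double.
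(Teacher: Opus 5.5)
Your proposal is correct and follows the paper's proof. The ambient model comes from equivariantly blowing up a free orbit in Proposition~\ref{prop:MKcover}, and the local formula $(u,v)\mapsto(-u,v)$, which exchanges $uv=\varepsilon$ and $uv=-\varepsilon$, shows that the chosen connected smoothing is not invariant.

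\textbf{A false side claim to delete.} Write $\Sigma$ for the smoothed surface. You assert that $\tau(\Sigma)$ is not isotopic to $\Sigma$ rel the boundary collars; in fact it \emph{is}. Consider
\[
(u,v)\mapsto\bigl(e^{i\theta\psi(|u|^2+|v|^2)}u,\,v\bigr),\qquad 0\le\theta\le\pi,
\]
where $\psi$ is a radial cutoff equal to $1$ on the smoothing region and $0$ near $\partial B$. These maps are diffeomorphisms fixed near $\partial B$, preserve $F$ and $S$ setwise, and equal $\tau$ on the smoothing region at $\theta=\pi$. Hence the final map carries $\Sigma$ to $\tau(\Sigma)$. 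Nothing in your argument needs the non-isotopy claim: non-invariance uses only $\tau(\Sigma)\neq\Sigma$ as sets.

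\textbf{Two smaller remarks.}
\begin{enumerate}
\item Because $S$ is pointwise fixed, a free orbit of blowup centers on the smoothed surface must lie on its $F$-part, away from the four fixed points of $F$.
\item You can rule out every connected invariant relative smoothing with the same collars, not just those of the form $uv=c$. The averaging argument of Proposition~\ref{prop:k3-local-obstruction} does this: an invariant connected surface containing a pointwise fixed collar is itself pointwise fixed.
\end{enumerate}
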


\begin{proof}
The twisted-double decomposition is \cite[Section~3.3]{ABP}; the ambient
cover follows by equivariantly blowing up a free orbit in
Proposition~\ref{prop:MKcover}.  The local formula above proves the
smoothing obstruction.
\end{proof}

\begin{proposition}\label{prop:x5-decorated}
The construction of $X_5$ is a fiber sum of three summands carrying local
branched-cover models.  Two are copies of
\[
 (M_K\times S^1,\iota_K)\longrightarrow
 (T^2\times S^2,B_1\sqcup B_3),
\]
where $B_1$ is a section, $B_3$ is a connected three-fold multisection, and the
nontrivial spherical braid monodromy is represented by
$\beta_{\rm tr}=h_\beta h_\alpha$.  The third is the Matsumoto
double cover
\[
 (Z,\tau_Z)\longrightarrow (W,B),\qquad
 W=(S^2\times T^2)\#4\CPb^2.
\]
Here $\tau_Z$ is the deck involution of $p:Z\to W$.
The marked fiber-section gluing $\phi_1$ defines $Y_K$, and the marked
genus-two gluing $\phi_2$ defines $X_5$.
\end{proposition}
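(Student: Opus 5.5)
The proposition is an assembly statement, so the plan is to collect the local models already established and match each to a summand in the published decomposition of $X_5$. I will start from the presentation
\[
 X_5=Y_K\#_{\Sigma_2=\Sigma_2'}Z,\qquad
 Y_K=P_s\cup_{\phi_1}P_f,
\]
recalled from \cite{AkhmedovSmall} and \cite[Section~4]{Akh07}. These two gluings exhibit $X_5$ as obtained from three closed pieces: two copies of $M_K\times S^1$ (the section-side and fiber-side copies) and the Matsumoto manifold $Z$. The gluing maps are the marked fiber-section map $\phi_1$, with peripheral values $(x,b,\lambda_K)\mapsto(\gamma_1',\gamma_2',[d,y])$ as in Section~\ref{subsec:YK-piecewise}, and a marked genus-two map $\phi_2$. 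The map $\phi_2$ identifies the complement boundary of $\Sigma_2=(u,v,d,y,\mu_{\Sigma_2})$ with that of the regular Matsumoto fiber $\Sigma_2'$, and reverses the meridian. I will state explicitly that $\phi_2$ is taken from \cite{AkhmedovSmall} and is not reconstructed here.

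Next I will supply the local branched-cover model for each summand. For the two copies of $M_K\times S^1$ I invoke Proposition~\ref{prop:MKcover} and Proposition~\ref{prop:split} with $K$ the right-handed trefoil. These give the quotient $(T^2\times S^2,B_1\sqcup B_3)$ with $B_1$ a section and $B_3$ a connected three-fold multisection. The spherical braid monodromy along $s$ is represented by $\beta_{\rm tr}=h_\beta h_\alpha$, as computed in Section~\ref{sec:piecewise-braids} from $A_{\rm tr}=T_bT_a$ via Birman--Hilden, and the monodromy along $t$ is trivial. For the third summand I take the square-root construction of Section~\ref{sec:matsumoto}: the cover $p:Z\to W=(S^2\times T^2)\#4\CPb^2$ branched along $B=\widetilde H_1\sqcup\widetilde H_2\sqcup\widetilde V_1\sqcup\widetilde V_2$, with the character $\chi_{\rm Mat}$ fixed there. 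Here $\tau_Z$ is by definition its deck involution. As a consistency check, I will cite the Euler characteristic and signature computations, $e(Z)=4$ and $\sigma(Z)=-4$, together with $Z\cong(T^2\times S^2)\#4\CPb^2$, to confirm that the covering total space is the summand used in $X_5$.

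I will check each summand against the notion of a marked local branched-cover model in Definition~\ref{def:marked-block}. Each summand carries a two-fold quotient, a branch surface, a covering character, and the required summing surface. In the first copy that surface is the section $S$, which is invariant and fixed. In the second copy it is a fiber $F$, invariant and transverse to the fixed set. In $Z$ it is a regular fiber $\Sigma_2'$, which is invariant under $\tau_{\rm bi}$ with two fixed points.

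The only delicate point is the scope of the statement rather than any computation. The proposition asserts only that local models exist and that $\phi_1$ and $\phi_2$ are the marked gluings; it does not assert that they are equivariant. I will therefore note, citing Remark~\ref{rem:globalization-criterion} and Lemma~\ref{lem:boundary-obstruction}, that $\phi_1$ cannot be equivariant for the natural actions. I will also note that the bielliptic-versus-hyperelliptic discussion of Section~\ref{sec:matsumoto} leaves the compatibility of $\phi_2$ open. With that caveat recorded, the proof is complete by the identifications above.
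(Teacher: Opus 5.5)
Your proposal is correct and follows the paper's proof. The two torus-bundle summands come from Propositions~\ref{prop:MKcover} and~\ref{prop:split}, with the trefoil braid word of Section~\ref{sec:piecewise-braids}; the third summand is the square-root cover of Section~\ref{sec:matsumoto}; and $\phi_1,\phi_2$ are recorded only as the marked gluings in the covering manifolds, with no equivariance asserted. Your extra checks (the values $e(Z)=4$, $\sigma(Z)=-4$ and the invariance of the summing surfaces) agree with the paper but are not needed for the statement.
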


\begin{proof}
The first two quotient pairs follow from
Proposition~\ref{prop:MKcover} and the splitting of the branch surface in
Proposition~\ref{prop:split}.  Section~\ref{sec:matsumoto} supplies the third
marked pair, including its horizontal and vertical branch components.  The
boundary identifications displayed below are the two gluing maps in the
covering manifolds.  No equivariance of these gluing maps is asserted.
\end{proof}

\subsection{The two braided pieces and the first gluing}

Write $P_s$ for the copy of $M_K\times S^1$ from which the section is removed
and $P_f$ for the copy from which the fiber is removed.  Associate to them
separate quotient pairs
\[
 (T^2\times S^2,B_{\rm tr}^{(s)}),\qquad
 (T^2\times S^2,B_{\rm tr}^{(f)}),
\]
where
\[
 B_{\rm tr}^{(\ell)}=B_1^{(\ell)}\sqcup B_3^{(\ell)},
 \qquad \ell\in\{s,f\},
\]
and the monodromy along the nontrivial base circle is represented by
$\beta_{\rm tr}=h_\beta h_\alpha$.  These are two local quotient models
with marked boundary data; they are not quotient complements already glued
in a common quotient manifold.

With the markings of \cite[Section~3.2.2]{AkhmedovSmall}, orient the first
boundary map from the section-complement side to the fiber-complement side.
Its induced map on the peripheral generators is
\[
 \phi_{1*}(x)=\gamma_1',\qquad
 \phi_{1*}(b)=\gamma_2',\qquad
 \phi_{1*}(\lambda_K)=[d,y],
\]
where $b$ is the knot meridian, $x$ is the additional product circle, and
\[
 \lambda_K=ab^2ab^{-4}
\]
is the preferred trefoil longitude.  In the convention fixed in
Section~\ref{subsec:YK-piecewise}, the last target is the negatively oriented
meridian of the deleted fiber.  This is marked peripheral data in the
covering manifolds.  By
Lemma~\ref{lem:boundary-obstruction}, it is not an equivariant
identification of the natural quotient boundaries.

\subsection{The genus-two sum and simple connectivity}

The unused section in $P_s$ and the unused fiber in $P_f$ join to form a
square-zero symplectic genus-two surface $\Sigma_2\subset Y_K$.  With the
conventions of \cite[Section~3.2]{AkhmedovSmall}, its standard generators
have images
\[
 u=a^{-1}b,\qquad v=b^{-1}aba^{-1},\qquad d,\qquad y
\]
in the complement.  A meridian $\mu$ of $\Sigma_2$ is represented, up to
basepoint paths, by $[x,b]$.

On the Matsumoto side use the generators $(a_1,b_1,a_2,b_2)$ fixed in
Section~\ref{sec:matsumoto}.  The second boundary map is chosen so that
\[
\begin{aligned}
 \phi_{2*}(u)&=a_2,& \phi_{2*}(v)&=b_2,&
 \phi_{2*}(d)&=a_1,\\
 \phi_{2*}(y)&=b_1,& \phi_{2*}(\mu)&=(\mu')^{-1}.&&
\end{aligned}
\]
Using the product normal framings, we extend the displayed surface map with
no additional meridional shear.  The inverse records reversal of the
oriented normal circle and is immaterial
to the group calculation because $\mu'$ is nullhomotopic.
The meridian $\mu'$ is nullhomotopic in
$Z\setminus\nu\Sigma_2'$, since it bounds a punctured exceptional section.
Thus $\mu$ becomes trivial after the sum.

In van Kampen form, $\pi_1(X_5)$ is the pushout of the two complement
groups over $\pi_1(\Sigma_2\times S^1)$ with the Matsumoto boundary map
precomposed by $\phi_{2*}$.  Since $\mu'$ bounds a punctured exceptional
section, the normal generator dies first.  After the four surface-generator
identifications, the Matsumoto complement relations become
\[
 ud=1,\qquad yv=1,\qquad [u,v]=[d,y]=1.
\]
After $\mu$ is killed, we may use the presentation of $\pi_1(Y_K)$ from
\cite[Section~3.2]{AkhmedovSmall}.  Besides
$[y,b]=1$, $aba=bab$, $dbd^{-1}=x^{-1}$, and
$dxd^{-1}=xb$, the Matsumoto relations give
\[
 a^{-1}bd=1,\qquad
 yb^{-1}aba^{-1}=1,\qquad
 [a^{-1}b,b^{-1}aba^{-1}]=1,\qquad [d,y]=1.
\]
The first equality gives $a=bd$, while the second, using $[y,b]=1$, gives
$yab=ba$.  Multiplying this equality on the right by $a$ and using
$aba=bab$ yields $yab=a^2$.  Comparing again with $yab=ba$ gives
$ba=a^2$, hence $b=a$.  Now $a=bd$ gives $d=1$;
$dbd^{-1}=x^{-1}$ and $dxd^{-1}=xb$ then give $b=x=a=1$, and
$yb^{-1}aba^{-1}=1$ gives $y=1$.  Therefore $u,v,d,y$ are trivial, and so
are the identified generators $a_1,b_1,a_2,b_2$ of the Matsumoto
complement.  Thus
\[
 \pi_1(X_5)=1.
\]
The displayed boundary map is the datum that a braided replacement must
preserve.  This is the reduction used in
\cite[Lemma~5.1]{AkhmedovSmall}, referring there to the proof of Lemma~4.1.

\subsection{Topology, minimality, and exoticity}

The sum formulas give
\[
 e(X_5)=8,\qquad \sigma(X_5)=-4.
\]
Together with simple connectivity these invariants give a unimodular form of
rank six and signature $-4$.  It is odd, since an even unimodular form has
signature divisible by eight, and hence it is
$\langle1\rangle\oplus5\langle-1\rangle$.  Therefore
$X_5\approx\CP^2\#5\CPb^2$ by Freedman's theorem \cite{Freedman}; here
$b_1=0$, $(b_2^+,b_2^-)=(1,5)$, and the Kirby--Siebenmann invariant vanishes
because $X_5$ is smooth.  The minimality argument in
\cite[Section~5]{AkhmedovSmall} applies Usher's theorem \cite{Usher}:
$Y_K$ is minimal, and the four exceptional sections in $Z$ meet the summing
surface.  Hence $X_5$ is symplectically minimal.  Since $\pi_1(X_5)=1$,
Hamilton--Kotschick \cite{HamiltonKotschick} implies that $X_5$ is
irreducible.  The standard $\CP^2\#5\CPb^2$ is reducible, so the two
manifolds are not diffeomorphic.  These facts identify the covering
manifold but do not supply a branched-cover description of $X_5$.
Theorem~\ref{thm:homology-product-obstruction} shows that the natural
fiber-section quotient cannot be completed by band attachments in a product
collar.  A global cover, if one exists, must use a
different involution on $Y_K$ or a non-product replacement whose additional
handles are included explicitly.

The covering formulas suggest the following numerical possibility.  Keep the Matsumoto
quotient
\[
 W=(S^2\times T^2)\#4\CPb^2
\]
and seek a branch surface $B_5\subset W$ homologous to the Matsumoto branch
divisor but obtained from it by two orientable tube additions in disjoint
four-balls, carrying the trefoil braid data.  The interiors of the tubing
arcs are disjoint from the divisor, and at each attaching disk we choose the
untwisted local framing relative to a normal trivialization there.  The
Matsumoto divisor has Euler characteristic $4$ and
self-intersection $-8$.  Two tubes would give
\[
 e(B_5)=0,\qquad B_5^2=-8,
\]
and the corresponding double cover would have
\[
 e=2e(W)-e(B_5)=8,\qquad
 \sigma=2\sigma(W)-\frac12B_5^2=-4.
\]
These are exactly the invariants of $X_5$.  Ordinary tube additions produce
an embedded surface with these numerical invariants.  What is not
constructed here is a tubed surface carrying the required trefoil braid,
covering character, and marked gluing data.  The torus-bundle quotient has four
branch points in a generic spherical fiber, whereas the Matsumoto divisor
has two generic horizontal sheets and two vertical components.  A genuine
construction must supply a transition region creating or absorbing the
additional branch points, satisfy the global braid relation, compute the
covering fundamental group, and prove that the lifted modification realizes
the marked gluings $\phi_1$ and $\phi_2$.

\section{\texorpdfstring{The $\CP^2\#3\CPb^2$ example}{The CP2 blowup at three points}}
\label{sec:x3}

The $X_3$ construction replaces the Matsumoto block by a third torus-bundle
block.  Throughout this section $K$ is the right-handed trefoil, and we use
the markings of \cite[Sections~3 and~5]{AP2008}.  Put
\[
 Q_K=(M_K\times S^1)\#2\CPb^2.
\]
Although the marked summing surface used below is not invariant, the ambient
manifold $Q_K$ itself has a branched-cover description.

\begin{proposition}\label{prop:QK-cover}
For either genus-one fibered knot considered here, $Q_K$ is a two-fold
branched cover of
\[
 (T^2\times S^2)\#\CPb^2.
\]
The branch surface is the proper transform of $B_1\sqcup B_3$, disjoint from
the exceptional divisor.
\end{proposition}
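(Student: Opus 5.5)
The plan is to realize $Q_K$ as an equivariant blowup of $M_K\times S^1$ at a free orbit of the elliptic involution, and then pass to the quotient; this is the same device used for $V_K$ in Section~\ref{sec:x5}. Start from the branched cover $\pi_K:M_K\times S^1\to T^2\times S^2$ of Proposition~\ref{prop:MKcover}, whose fixed set is the preimage of $B_1\sqcup B_3$ (Proposition~\ref{prop:split}). Choose a point $y\in T^2\times S^2$ not on $B_1\sqcup B_3$. Its preimage is a free orbit $\{x,\iota_K(x)\}$ of two distinct points.

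Choose an invariant pair of disjoint balls $D_x$ and $D_{\iota x}=\iota_K(D_x)$, small enough to avoid the fixed set. On $D_x$ fix a complex chart, and transport it to $D_{\iota x}$ by $\iota_K$. Blow up both points using these charts. Since $\iota_K$ carries the first chart to the second, it extends to an involution of
\[
 (M_K\times S^1)\#2\CPb^2=Q_K
\]
that exchanges the two exceptional spheres $E_1,E_2$. Away from $D_x\cup D_{\iota x}$ the new involution agrees with $\iota_K$. Its fixed set is therefore still the ramification surface over $B_1\sqcup B_3$, and it acts freely on the modified region.

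For the quotient, note that $D_x\sqcup D_{\iota x}$ maps diffeomorphically onto a single ball $D_y$ around $y$. The blown-up region $(D_x\#\CPb^2)\sqcup(D_{\iota x}\#\CPb^2)$, modulo the free swap, is identified with one copy $D_y\#\CPb^2$. The quotient is therefore $(T^2\times S^2)\#\CPb^2$, obtained by blowing up the base at $y$. The new exceptional divisor $E$ lifts to $E_1\sqcup E_2$, and the quotient map restricted to $E_1\sqcup E_2\to E$ is an unbranched trivial double cover.

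The branch locus is unchanged as a subset. Because $y\notin B_1\sqcup B_3$, the proper transform of $B_1\sqcup B_3$ coincides with $B_1\sqcup B_3$ and is disjoint from $E$. The covering character extends over the blown-up region with value $0$ on $E$, since that region lifts to two disjoint copies. As a consistency check, $e(Q_K)=2e\bigl((T^2\times S^2)\#\CPb^2\bigr)-e(B)=2\cdot1-0=2$ and $\sigma(Q_K)=2\cdot(-1)-0=-2$, and both values agree with those of $Q_K$.

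The only point that needs care is the equivariant blowup. One must check that the diffeomorphism type obtained does not depend on the complex orientation chosen at $x$. Transporting the chart by the orientation-preserving map $\iota_K$ gives the standard $\CPb^2$ summand at both points, so the result is the ordinary blowup $Q_K$. The argument uses nothing specific to the knot beyond Proposition~\ref{prop:MKcover}, so it applies to both the trefoil and the figure-eight knot.
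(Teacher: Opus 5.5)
Your proposal is correct and takes the same approach as the paper. You blow up a free orbit of the elliptic involution equivariantly, so the involution extends by exchanging the two exceptional spheres, and the quotient is a single ordinary blowup of $T^2\times S^2$ at a point off $B_1\sqcup B_3$, which leaves the branch surface unchanged. Your added details (transported orientation-compatible charts, trivial character near $E$, and the check $e(Q_K)=2$, $\sigma(Q_K)=-2$) are consistent with the computation the paper gives immediately afterward.
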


\begin{proof}
Choose an orbit of two points in $M_K\times S^1$, disjoint from the fixed
set and exchanged freely by the natural elliptic involution, and blow up
both points equivariantly.  The involution
lifts and exchanges the two exceptional spheres.  Downstairs this operation
is one ordinary blowup at the image point.  Since the orbit was chosen away
from the fixed set, the branch surface is unchanged except for its proper
transform.  The covering manifold is
$(M_K\times S^1)\#2\CPb^2=Q_K$.
\end{proof}

This proposition identifies the ambient manifold only.  For the natural
elliptic action, the connected smoothing used to construct $\Sigma_2''$ is
sent locally to the opposite smoothing before the blowups are performed.
Consequently, no choice of a free pair of blowup points on that chosen
smoothing makes $(Q_K,\Sigma_2'')$ an equivariant marked pair.

\subsection{The ambient quotient and the marked summing surface}

The distinction can be made numerically and locally.  Let
\[
 \overline Q=(T^2\times S^2)\#\CPb^2
\]
be the quotient in Proposition~\ref{prop:QK-cover}.  The point blown up in
the quotient is chosen away from $B_K$, so the branch surface in $\overline Q$ is canonically
identified with $B_K$ and remains disjoint from the exceptional curve.  The
covering formulas give
\[
 e(Q_K)=2e(\overline Q)-e(B_K)=2,
 \qquad
 \sigma(Q_K)=2\sigma(\overline Q)-\frac12B_K^2=-2,
\]
as expected for two blowups of $M_K\times S^1$.

In the covering manifold, the two exceptional spheres form one free orbit.
This differs from
the geometry of the prescribed surface $\Sigma_2''$.  In its construction,
the two blowup centers lie on the connected smoothing of $F\cup S$, and each
exceptional sphere meets the proper transform once.  To retain a quotient of
the marked pair, the smoothing must first be invariant and the two centers
must form an invariant set.  The natural elliptic involution fails at the
first requirement.  Proposition~\ref{prop:k3-local-obstruction} below shows
that this is not a matter of moving the two centers.

There are therefore three distinct objects:
\begin{enumerate}[label=(\roman*)]
\item the branched cover $Q_K\to\overline Q$ obtained by an equivariant
blowup at a free orbit;
\item the symplectic pair $(Q_K,\Sigma_2'')$ used in the genus-two sum;
\item a hypothetical equivariant marked pair whose underlying manifold and
surface are diffeomorphic to $(Q_K,\Sigma_2'')$.
\end{enumerate}
The first exists, the second is the input to $X_3$, and the existence of the
third is the unresolved quotient problem.

Let $F$ and $S$ be a symplectic torus fiber and the zero section in
$M_K\times S^1$.  They meet transversely and positively once.  Smoothing
$F\cup S$ produces a connected symplectic surface in the class $[F]+[S]$.
It has genus two and square
\[
 ([F]+[S])^2=2F\cdot S=2.
\]
Blow up two points on this smoothed surface.  If $E_1,E_2$ are the
exceptional classes, its proper transform $\Sigma_2''\subset Q_K$ satisfies
\[
 [\Sigma_2'']=[F]+[S]-E_1-E_2,\qquad
 (\Sigma_2'')^2=2-1-1=0.
\]
Each exceptional sphere meets $\Sigma_2''$ once.  The manifold constructed
in \cite[Section~5]{AP2008} is the symplectic genus-two sum
\[
 X_3=Y_K\#_{\Sigma_2=\Sigma_2''}Q_K.
\]

\subsection{The three local quotient models and the gluing maps}

The construction of $Y_K$ uses two copies of $M_K\times S^1$, and $Q_K$
contains a third.  Before the fiber-section sums, smoothing, and blowups,
there are therefore three local quotient pairs
\[
 (T^2\times S^2,B_{\rm tr}^{(i)}),\qquad
 B_{\rm tr}^{(i)}=B_1^{(i)}\sqcup B_3^{(i)},\qquad 1\leq i\leq3.
\]
For each pair, the nontrivial base-circle monodromy is the trefoil braid of
Section~\ref{sec:piecewise-braids}; the second base-circle monodromy is
trivial.  These pre-smoothing models do not by themselves give a quotient
pair adapted to $(Q_K,\Sigma_2'')$ or a quotient of $X_3$.  The peripheral
marking and the local action at the smoothed intersection must also be
preserved.

On the $Y_K$ side the surface generators are
\[
 a^{-1}b,\qquad b^{-1}aba^{-1},\qquad d,\qquad y,
\]
and its meridian is $\mu=[x,b]$.  On the $Q_K$ side choose generators
$g,h,z$ as in \cite{AP2008}.  The boundary map used in the genus-two sum is
\[
\begin{aligned}
 \phi_{3*}(a^{-1}b)&=h,&
 \phi_{3*}(b^{-1}aba^{-1})&=z,\\
 \phi_{3*}(d)&=g^{-1}h,&
 \phi_{3*}(y)&=h^{-1}ghg^{-1},\\
 \phi_{3*}(\mu)&=(\mu'')^{-1}.
\end{aligned}
\]
This is the same boundary map as in \cite[Section~5, proof of
Lemma~13]{AP2008}, with the oriented meridian-reversing convention made
explicit.  Choices of basepoint paths may conjugate the displayed elements
and do not affect the pushout calculation.
Here $\mu''$ is a meridian of $\Sigma_2''$.  Puncturing either exceptional
sphere at its intersection with $\Sigma_2''$ gives a disk bounded by
$\mu''$.  Thus $\mu''$ is nullhomotopic in the complement and $\mu=[x,b]$
becomes trivial after the sum.

Together with the cited complement presentations, the five values of
$\phi_{3*}$ determine the pushout of the two complement groups over
$\pi_1(\Sigma_2\times S^1)$.  The relations
\[
 a^{-1}b=h,\qquad b^{-1}aba^{-1}=z,\qquad
 d=g^{-1}h,\qquad y=h^{-1}ghg^{-1}
\]
identify the two marked symplectic bases, while $[x,b]=1$ comes from the
normal-circle identification.  The last relation is geometrically supplied
by either punctured exceptional sphere.  Thus the blowups, although they
complicate equivariance, simplify the fundamental-group calculation.

\begin{lemma}\label{lem:k3-pi1}
The manifold $X_3$ is simply connected.
\end{lemma}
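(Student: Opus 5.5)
The plan is a van Kampen pushout calculation, parallel to the one carried out for $X_5$ in Section~\ref{sec:x5}. By Seifert--van Kampen, $\pi_1(X_3)$ is the pushout of $\pi_1(Y_K\setminus\nu\Sigma_2)$ and $\pi_1(Q_K\setminus\nu\Sigma_2'')$ over $\pi_1(\Sigma_2\times S^1)$, with the second map precomposed by $\phi_{3*}$. I would first use the fact recorded above that $\mu''$ bounds a punctured exceptional sphere. This kills $\mu=[x,b]$, so the $Y_K$ side may be replaced by the presentation of $\pi_1(Y_K)$ cited from \cite{AkhmedovSmall,AP2008}. That presentation contains $aba=bab$, $[y,b]=1$, $dbd^{-1}=x^{-1}$, $dxd^{-1}=xb$, together with the relations coming from the Luttinger surgeries.

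Next I would bring in the $Q_K$ side. Since $\mu''$ is trivial there, $\pi_1(Q_K\setminus\nu\Sigma_2'')$ is a quotient of $\pi_1(M_K\times S^1)$. This group is generated by the torus-fiber generators and the two base generators, with the trefoil monodromy relations. From \cite{AP2008} I would take the presentation in terms of $g,h,z$, including the relations that come from the two surviving surface directions. The key relations are the monodromy conjugation relations, of the shape $g h g^{-1}=$ (a word in $h$ and the fiber generators), with $z$ central, as for the trefoil $A_{\rm tr}$.

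The main step is then to substitute $a^{-1}b=h$, $b^{-1}aba^{-1}=z$, $d=g^{-1}h$, $y=h^{-1}ghg^{-1}$ and derive triviality. I would try to copy the $X_5$ argument. First use $[y,b]=1$ and the braid relation $aba=bab$ to force $b=a$, which gives $h=1$. Then $d=g^{-1}$ and $y=1$. The relations $dbd^{-1}=x^{-1}$ and $dxd^{-1}=xb$ then give $x=b^{-1}$ and $b=1$, hence $a=x=1$. Finally $g=d^{-1}$ is killed by the $Q_K$ relation expressing the monodromy action (with $h=1$, the conjugation relation forces $g$, or the remaining fiber generator, to be trivial), and $z=b^{-1}aba^{-1}=1$. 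Every generator of both pieces is then trivial, so $\pi_1(X_3)=1$.

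I expect the main obstacle to be this middle derivation. Unlike the $X_5$ case, the image of $y$ is a conjugate rather than a generator, so the cancellation showing $b=a$ will need the $Q_K$ relations, for instance commutativity of $h$ and $z$ coming from the torus fiber, in addition to the $Y_K$ relations. If the direct route stalls, I would instead first kill $h$ and $z$ using their commutation with $g$ in $Q_K$, and then feed $a=b$ back into the trefoil relations. As in \cite[Lemma~13]{AP2008}, conjugation by basepoint paths does not affect the outcome.
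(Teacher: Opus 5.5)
\textbf{There is a genuine gap.} Your framework is the same as the paper's. You apply van Kampen over $\pi_1(\Sigma_2\times S^1)$. The punctured exceptional sphere kills $\mu''$, hence $\mu=[x,b]$, so the $Y_K$ side becomes $\pi_1(Y_K)$. You then substitute $a^{-1}b=h$, $b^{-1}aba^{-1}=z$, $d=g^{-1}h$, $y=h^{-1}ghg^{-1}$. The gap is in the step that actually collapses the group.

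Your first move, deriving $a=b$ from $[y,b]=1$ and $aba=bab$, is borrowed from the $X_5$ computation. There the decisive extra input was the Matsumoto relation $yv=1$, i.e.\ $yb^{-1}aba^{-1}=1$, which comes from $b_1b_2=1$. Here $y\mapsto h^{-1}ghg^{-1}$ and $v\mapsto z$, and nothing identifies $y$ with $v^{-1}$. The relations $[y,b]=1$ and $aba=bab$ already hold in $\pi_1(Y_K)$, where the knot group of $P_s$ survives and $a\neq b$, so they cannot force $a=b$ on their own.

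Your fallbacks do not supply it either. The relation $[h,z]=1$ is just $[u,v]=1$, which holds in $\pi_1(M_K)$ where still $a\neq b$. And $h$ does not commute with $g$; they are Wirtinger generators with $ghg=hgh$.

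There is also an ordering error afterwards. The relation $dbd^{-1}=x^{-1}$ gives $x=b^{-1}$ only once $d=1$, but at that point you only have $d=g^{-1}$. You must first get $g=1$, which does follow from $ghg=hgh$ once $h=1$.

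The missing ingredient is a relation coupling all three torus-bundle blocks. The paper imports it from \cite{AP2008}. The relations $bhb=hb^2h$, $[d,hbh^{-1}]=1$, $[h,bdb^{-1}]=1$ give $db=bd$. Then $d^2=b^{-1}dbdb^{-1}$ gives $b=1$, and $b^2=hbh^{-1}bh$, $h^2=dhd^{-1}hd$ give $h=d=1$.

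Within your own setup you can close the gap directly as follows.
\begin{enumerate}
\item Since $\mu''$ is nullhomotopic, $\pi_1(Q_K\setminus\nu\Sigma_2'')\cong\pi_1(M_K\times S^1)$, in which the product-circle class $z$ is central. So $v=z$ commutes with $d=g^{-1}h$. This is the paper's relation $[h,bdb^{-1}]=1$, because $b^{-1}hb=b^{-1}ub=v^{-1}$.
\item Also $v$ commutes with $x$, which is central in $\pi_1(P_s)$.
\item Hence $v$ commutes with $dxd^{-1}=xb$ and with $x$, and therefore with $b$.
\item In the trefoil group, with $u=a^{-1}b$ and $v=b^{-1}aba^{-1}$, one has $bub^{-1}=uv$ and $bvb^{-1}=u^{-1}$. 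So $[v,b]=1$ gives $v=u^{-1}$, then $bub^{-1}=1$, hence $u=v=1$. That is, $h=z=1$ and $a=b$.
\item Your endgame now works in the corrected order. The relation $ghg=hgh$ gives $g=1$, hence $d=y=1$. Then $dbd^{-1}=x^{-1}$ and $dxd^{-1}=xb$ give $b=x=1$.
\end{enumerate}
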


\begin{proof}
After translating the generator conventions, the pushout is precisely the
group computed in \cite[Section~5, Lemma~13, and Appendix]{AP2008}.  The
relations established there include
\[
\begin{aligned}
 bhb&=hb^2h,& [d,hbh^{-1}]&=1,& [h,bdb^{-1}]&=1,\\
 d^2&=b^{-1}dbdb^{-1},& b^2&=hbh^{-1}bh,&
 h^2&=dhd^{-1}hd.
\end{aligned}
\]
The first three relations give $db=bd$; the fourth then gives $b=1$, and
the last two give $h=d=1$.  The marked gluing relations give
$a=g=y=z=1$, while $dbd^{-1}=x^{-1}$ gives $x=1$.  Thus
$\pi_1(X_3)=1$.
\end{proof}

\subsection{Topology, minimality, and exoticity}

The sum formulas give
\[
 e(X_3)=6,\qquad \sigma(X_3)=-2.
\]
Together with simple connectivity this gives a unimodular form of rank four
and signature $-2$.  An even unimodular form has signature divisible by eight, so this
form is odd and hence is
$\langle1\rangle\oplus3\langle-1\rangle$.  Here $b_1=0$,
$(b_2^+,b_2^-)=(1,3)$, $X_3$ is nonspin, and the Kirby--Siebenmann
invariant vanishes because $X_3$ is smooth.  Freedman's theorem
\cite{Freedman} yields $X_3\approx\CP^2\#3\CPb^2$.  The minimality argument
in \cite[Section~5]{AP2008} applies Usher's theorem \cite{Usher}: $Y_K$ is
minimal, and each exceptional sphere in $Q_K$ meets $\Sigma_2''$ once.
Hence $X_3$ is symplectically minimal.  Since $\pi_1(X_3)=1$,
Hamilton--Kotschick \cite{HamiltonKotschick} implies that $X_3$ is
irreducible.  The standard $\CP^2\#3\CPb^2$ is reducible, so the two
manifolds are not diffeomorphic.

\subsection{The local smoothing obstruction}

Before smoothing, the zero section is fixed pointwise by the elliptic
involution.  At its intersection with the invariant fiber, choose complex
coordinates such that
\[
 S=\{w=0\},\qquad F=\{z=0\},\qquad
 \tau(z,w)=(z,-w).
\]
Thus $\tau$ preserves both axes, fixes the section pointwise, and acts by
$-1$ in the fiber direction.

\begin{proposition}\label{prop:k3-local-obstruction}
Let $B\subset\mathbb C^2$ be a sufficiently small $\tau$-invariant ball.
There is no connected $\tau$-invariant smooth surface in $B$ which is a
relative smoothing of $S\cup F$ and which agrees near its two boundary
circles with the prescribed section and fiber collars.
\end{proposition}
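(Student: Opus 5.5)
The plan is a rigidity argument for smooth involutions. Suppose, for contradiction, that $\Sigma\subset B$ is a connected, $\tau$-invariant, smoothly embedded surface which is a relative smoothing of $S\cup F$ and agrees near $\partial B$ with the prescribed collars. I will show that the restriction $\tau|_\Sigma$ must be the identity, and then that this is incompatible with the fiber collar.

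First, since $\Sigma$ is a smooth $\tau$-invariant submanifold, $\sigma=\tau|_\Sigma$ is a smooth involution of the connected surface $\Sigma$. Next, I examine the two collars, using the local model $\tau(z,w)=(z,-w)$ fixed just before the statement.
\begin{itemize}
\item Near the boundary circle coming from $S=\{w=0\}$, $\Sigma$ coincides with an annular collar of $S$. There $\tau$ is the identity, so $\sigma$ fixes a nonempty open subset of $\Sigma$.
\item Near the other boundary circle, $\Sigma$ coincides with a collar of $F=\{z=0\}$ in which $w\neq 0$. There $\tau(0,w)=(0,-w)\neq(0,w)$, so $\sigma$ has no fixed points on that collar.
\end{itemize}

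The key step is the standard fact that a smooth involution of a connected manifold which fixes a nonempty open set is the identity. To prove it, average a Riemannian metric on $\Sigma$ over $\{1,\sigma\}$ so that $\sigma$ becomes an isometry. Near a fixed point $p$, $\sigma$ is conjugated by $\exp_p$ to the linear involution $d\sigma_p$. Hence the set of fixed points $p$ with $d\sigma_p=\mathrm{id}$ is open. It is also closed, since it equals the fixed set of $\sigma$ intersected with the closed set $\{p: d\sigma_p=\mathrm{id}\}$. This set is nonempty because it contains the $S$-collar. By connectedness it is all of $\Sigma$, so $\sigma=\mathrm{id}_\Sigma$. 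This contradicts the freeness of $\sigma$ on the $F$-collar. Equivalently, $\Sigma\subset\operatorname{Fix}(\tau)\cap B=S\cap B$, which cannot contain the fiber collar.

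I expect the only delicate point to be making precise what ``relative smoothing'' permits. In particular, one must check that the argument uses only connectedness, $\tau$-invariance, smoothness of the embedding, and the two boundary collars. It uses no genus or homology information about $\Sigma$, so it also rules out invariant smoothings with handles. I would also remark that the argument explains the earlier observation that $\tau$ exchanges $uv=\varepsilon$ with $uv=-\varepsilon$. Indeed, any invariant smoothing would have to lie in the fixed surface $S$. The disconnected configuration $S\sqcup F$ is therefore the only invariant option, and it is excluded by the connectedness hypothesis.
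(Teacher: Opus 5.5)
Your argument is correct and is essentially the paper's proof. Both average a metric so that $\tau|_\Sigma$ becomes an isometry, use that an isometry of a connected surface fixing an open collar is the identity, and conclude $\Sigma\subset\operatorname{Fix}(\tau)=\{w=0\}$, which contradicts the fiber collar; the only difference is that you also prove the rigidity fact via the exponential map, which the paper simply quotes.
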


\begin{proof}
Suppose that such a connected surface $A$ with boundary exists.  Its collar on the section side lies
in $S=\{w=0\}$ and is fixed pointwise by $\tau$.  Average a Riemannian metric
on $A$ over the order-two action.  Then $\tau|_A$ is an isometry of the
connected surface $A$ which is the identity on a nonempty open collar.  An
isometry equal to the identity on an open set is globally the identity.
Hence
\[
 A\subset\operatorname{Fix}(\tau)=\{w=0\}.
\]
This is impossible because the other boundary collar agrees with
$F=\{z=0\}$ away from the origin, where the action is not pointwise fixed.
\end{proof}

The local annuli $\{zw=\varepsilon\}$ and $\{zw=-\varepsilon\}$ are exchanged
by $\tau$.  Their union is invariant, but has two components and is not the
connected relative smoothing used for $\Sigma_2''$.  The involution
$(z,w)\mapsto(w,z)$ preserves $\{zw=\varepsilon\}$.  Although it is conjugate
to $\tau$ after the coordinate-axis marking is forgotten, it exchanges the
two axes and does not preserve the pointwise fixed section.  Thus the
proposition is an obstruction to the natural marked elliptic action, not to
every abstract involution on a ball.

Put
\[
 \sigma(z,w)=(w,z),\qquad q_\sigma(z,w)=(s,p)=(z+w,zw).
\]
Then $\mathbb C^2/\langle\sigma\rangle\cong\mathbb C^2$, the branch curve is
$s^2=4p$, and the invariant smoothing $\{zw=\varepsilon\}$ descends to the
line $\{p=\varepsilon\}$.  The smoothing is an annulus whose quotient is a
disk, and its two fixed points map to
$s=\pm2\sqrt\varepsilon$.  Moreover, the two blowups may be made
equivariantly at a free $\sigma$-orbit on the smoothing.  The exceptional
spheres are exchanged and the quotient undergoes one ordinary blowup.
Since the global smoothing has square two, the two blowups in the covering
manifold reduce
the square of its proper transform to zero.

This swap model cannot extend over the unchanged trefoil block as an
involution interchanging the marked tori $F$ and $S$.

\begin{lemma}\label{lem:no-swap-extension}
For the right-handed trefoil, no self-diffeomorphism of $M_K\times S^1$
carries the fiber torus $F$ to the section torus $S$.
\end{lemma}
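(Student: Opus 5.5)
The plan is to distinguish the tori $F$ and $S$ by the fundamental groups of their complements, using the center as the invariant. Suppose $\Psi$ is a self-diffeomorphism of $M_K\times S^1$ with $\Psi(F)=S$. By uniqueness of tubular neighborhoods, after an isotopy $\Psi$ carries $\nu F$ onto $\nu S$. It therefore restricts to a diffeomorphism
\[
 P_f=(M_K\times S^1)\setminus\operatorname{int}\nu F\;\longrightarrow\;
 P_s=(M_K\times S^1)\setminus\operatorname{int}\nu S .
\]
In particular $\pi_1(P_f)\cong\pi_1(P_s)$. Orientation conventions play no role in this step.

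\emph{The section complement.} By Section~\ref{subsec:YK-piecewise},
\[
 P_s\cong (S^3\setminus\operatorname{int}\nu K)\times S^1,
 \qquad\text{so}\qquad
 \pi_1(P_s)\cong G_K\times\Z,
\]
where $G_K=\langle a,b\mid aba=bab\rangle$ is the trefoil group. The center of $G_K$ is the infinite cyclic group generated by $(ab)^3$. Hence
\[
 Z(\pi_1(P_s))\cong\Z^2\neq 1 .
\]

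\emph{The fiber complement.} The manifold $P_f$ is the $T^2$-bundle over the punctured torus $T_0$ with monodromy $A_{\rm tr}$ along $d$ and trivial monodromy along $y$. Since the fiber is aspherical and $\pi_1(T_0)$ is free on $d,y$, the homotopy sequence gives
\[
 \pi_1(P_f)\cong\Z^2\rtimes F\langle d,y\rangle .
\]
Here $d$ acts on $\Z^2$ by $A_{\rm tr}$ and $y$ acts trivially. The free group is a genuine complement because the bundle over a punctured surface has a section.

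Now let $(v,w)$ be a central element.
\begin{enumerate}[label=(\roman*)]
\item Its image $w$ is central in the free group $F\langle d,y\rangle$ of rank two, so $w=1$.
\item Then $v\in\Z^2$ must commute with $d$, which means $A_{\rm tr}v=v$.
\item The eigenvalues of $A_{\rm tr}$ are primitive sixth roots of unity, since $A_{\rm tr}^3=-I$ and $A_{\rm tr}\neq\pm I$. Equivalently, $\det(A_{\rm tr}-I)=1\neq0$. Either way $A_{\rm tr}$ has no nonzero fixed vector, so $v=0$.
\end{enumerate}
Thus $Z(\pi_1(P_f))=1$. This contradicts $Z(\pi_1(P_s))\cong\Z^2$, so no such $\Psi$ exists.

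The only step needing care is the identification of $\pi_1(P_f)$ as a semidirect product with the stated action, in particular that the monodromy along $y$ is trivial. For the same reason, abelianization would not work here: both groups have $H_1\cong\Z^2$, because $\det(A_{\rm tr}-I)=1$.
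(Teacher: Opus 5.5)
Your proof is correct, but it takes a different route from the paper. The paper never looks at the complements. It uses the Wang sequence to show that $H_1(F)\to H_1(M_K\times S^1)$ factors through $\operatorname{coker}(A_{\rm tr}-I)=0$, so this map is zero. By contrast, $H_1(S)\to H_1(M_K\times S^1)\cong\Z^2$ is an isomorphism, since the section circle and the product circle generate. A diffeomorphism with $\Psi(F)=S$ would carry the first image onto the second, which is impossible by rank.

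The paper's argument is shorter and needs only ambient first homology, with no identification of $P_f$ or $P_s$. It also rules out homeomorphisms or homotopy equivalences of pairs. Your argument gives a different conclusion: the complements $P_f$ and $P_s$ themselves have non-isomorphic fundamental groups, with trivial center versus center $\Z^2$. So they are not even homotopy equivalent, regardless of any extension over the tubular neighborhoods. This is a statement about the pieces actually glued in the fiber-section sum. You are also right that abelianizing the complements would not distinguish them.

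Two small corrections.
\begin{enumerate}
\item Injectivity of $\Z^2\to\pi_1(P_f)$ comes from $\pi_2(T_0)=0$, i.e.\ asphericity of the base, not of the fiber.
\item You do not need the center of the trefoil group, nor the trivial $y$-action. The product circle already gives $Z(\pi_1(P_s))\supseteq\Z\neq1$, and triviality of $Z(\pi_1(P_f))$ uses only that $d$ acts without nonzero fixed vectors. Hence your argument applies verbatim to the figure-eight knot, where $\det(A_{\rm fig}-I)=-1\neq0$.
\end{enumerate}
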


\begin{proof}
Let $A_{\rm tr}$ be the monodromy on the torus fiber.  The Wang sequence gives
\[
 H_1(M_K;\mathbb Z)\cong
 \mathbb Z\oplus\operatorname{coker}(A_{\rm tr}-I).
\]
Since $\det(A_{\rm tr}-I)=\pm1$, the cokernel vanishes and
$H_1(M_K\times S^1;\mathbb Z)\cong\mathbb Z^2$.  The map induced by the
fiber inclusion factors through this cokernel and is therefore zero.  By
contrast, the mapping-torus circle and the external product circle generate
the section torus and map isomorphically onto $H_1(M_K\times S^1)$.
Thus the fiber inclusion has rank-zero image on first homology, whereas the
section inclusion has rank-two image.  A self-diffeomorphism preserves this
rank, so it cannot carry $F$ to $S$.
\end{proof}

The two blowups occur only after the connected smoothing.  Equivariant
blowups can preserve an invariant surface, but cannot make this noninvariant
relative smoothing invariant.  Hence the third torus-bundle block has the
same local braided quotient as the first two, while the operation producing
$\Sigma_2''$ prevents the three natural local quotient descriptions from
assembling through the original construction into a global involution on
$X_3$.

\begin{theorem}\label{thm:x3-decorated}
The construction of $X_3$ admits three local branched-cover models.  Each
of its three copies of $M_K\times S^1$ carries the two-fold
quotient with branch surface $B_1\sqcup B_3$ and trefoil monodromy
$\beta_{\rm tr}$.  These are the marked pieces used in the construction of
$X_3$.  They yield a simply connected, symplectically
minimal, irreducible symplectic
manifold homeomorphic but not diffeomorphic to
$\CP^2\#3\CPb^2$.  The three natural local deck involutions do not extend
through the prescribed smoothing and gluing maps, so this statement makes no claim
that $X_3$ is a global double branched cover.
\end{theorem}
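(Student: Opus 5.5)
The proof assembles results already established, one for each clause of Theorem~\ref{thm:x3-decorated}. First I would fix the three local models. The construction of $X_3$ uses two copies of $M_K\times S^1$ inside $Y_K$ and a third inside $Q_K=(M_K\times S^1)\#2\CPb^2$. Proposition~\ref{prop:MKcover} and Proposition~\ref{prop:split}, applied to each copy with $K$ the right-handed trefoil, give the quotient pair $(T^2\times S^2,B_1\sqcup B_3)$. Section~\ref{sec:piecewise-braids} identifies the monodromy along the mapping-torus circle with $\beta_{\rm tr}=h_\beta h_\alpha$ and the monodromy along the other base circle as trivial. For the third copy I would also invoke Proposition~\ref{prop:QK-cover}: blowing up equivariantly at a free orbit shows that the ambient manifold $Q_K$ covers $(T^2\times S^2)\#\CPb^2$ with unchanged branch surface. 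This settles the clause about three local branched-cover models, together with the marked gluing data $\phi_1$ (twice, inside $Y_K$) and $\phi_3$ recorded earlier.

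Second, I would establish the topological and smooth properties by quoting the earlier subsections. Lemma~\ref{lem:k3-pi1} gives $\pi_1(X_3)=1$. The sum formulas give $e(X_3)=6$ and $\sigma(X_3)=-2$, so the intersection form is odd of rank four and signature $-2$, and Freedman's theorem yields $X_3\approx\CP^2\#3\CPb^2$. Gompf's theorem shows the genus-two sum along the square-zero symplectic surfaces $\Sigma_2$ and $\Sigma_2''$ is symplectic. Usher's theorem gives symplectic minimality, since $Y_K$ is minimal and each exceptional sphere of $Q_K$ meets $\Sigma_2''$ once. Hamilton--Kotschick then gives irreducibility. Because $\CP^2\#3\CPb^2$ is reducible, $X_3$ is not diffeomorphic to it.

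Third, I would prove the non-extension clause, which is the only part needing genuine argument. Inside $Y_K$, Lemma~\ref{lem:boundary-obstruction} shows that $\phi_1$ cannot intertwine the natural actions on $\partial P_s$ and $\partial P_f$, since the first is free and the second has four fixed circles. Theorem~\ref{thm:homology-product-obstruction} further excludes interpolating by a product collar. On the $Q_K$ side, Proposition~\ref{prop:k3-local-obstruction} shows that the natural elliptic involution $\tau$ preserves no connected relative smoothing of $S\cup F$ with the prescribed collars. The surface $\Sigma_2''$ is therefore not $\tau$-invariant, and blowing up at any centers afterwards cannot restore invariance. The swap involution $\sigma$ does preserve the smoothing locally, but Lemma~\ref{lem:no-swap-extension} prevents it from extending over the unchanged trefoil block. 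Consequently Remark~\ref{rem:globalization-criterion} fails for the prescribed smoothing and gluing maps, and no assembled involution exists.

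The main obstacle is stating this last step precisely. The claim concerns only the specified local involutions and the original marked gluings; it does not assert that $X_3$ admits no branched-cover structure at all. I would state it as a failure of the necessary conditions in Remark~\ref{rem:globalization-criterion}. Either obstruction suffices on its own: the boundary non-conjugacy inside $Y_K$, or the smoothing obstruction inside $Q_K$.
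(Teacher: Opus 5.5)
Your proposal is correct and follows essentially the paper's route. The topology comes from Lemma~\ref{lem:k3-pi1} together with the Freedman, Usher, and Hamilton--Kotschick arguments of Section~\ref{sec:x3}, and the non-extension clause from Lemma~\ref{lem:boundary-obstruction} and Proposition~\ref{prop:k3-local-obstruction}; your extra appeals to Theorem~\ref{thm:homology-product-obstruction} and Lemma~\ref{lem:no-swap-extension} are consistent with the paper but not needed, since the statement concerns only the natural local involutions.
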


\begin{proof}
The topology follows from Lemma~\ref{lem:k3-pi1} and the calculations above.
The failure to assemble follows from
Proposition~\ref{prop:k3-local-obstruction} and
Lemma~\ref{lem:boundary-obstruction}.
\end{proof}

\section{The extension problem}
\label{sec:global-problem}

The boundary-action, relative-homology, and smoothing obstructions above
show that the natural local involutions do not assemble by the original
constructions.  They leave open constructions using a different involution
or a quotient cobordism that is not a product.

\begin{question}\label{question:global-cover}
Does any of $X_5$, $X_5^{\rm sym}$, or $X_3$ occur as a double cover of a
four-manifold diffeomorphic to a rational or ruled surface, branched over a
smoothly embedded surface?
\end{question}

\subsection{Numerical restrictions on possible quotient manifolds}

There are also numerical restrictions.  Suppose
\[
 p:X_k\longrightarrow R,\qquad k\in\{3,5\},
\]
is a double branched cover of a closed oriented four-manifold $R$, branched over a
closed embedded oriented surface $B\subset R$ with a chosen coorientation.
Then there is a class $L\in H^2(R;\mathbb Z)$ such that
$\operatorname{PD}[B]=2L$, and
\[
 e(X_k)=2e(R)-e(B),\qquad
 \sigma(X_k)=2\sigma(R)-\frac12B^2.
\]
The choice of $L$ and the branch class need not determine the cover.
If $H^1(R;\mathbb Z/2)\ne0$, the covering characters may differ by
unbranched characters.
For $X_5$ this gives
\[
 e(B)=2e(R)-8,\qquad B^2=4\sigma(R)+8,
\]
and for $X_3$ it gives
\[
 e(B)=2e(R)-6,\qquad B^2=4\sigma(R)+4.
\]
These formulas determine neither the embedding of $B$ nor the covering
character.

For the natural product candidates,
\[
 R=T^2\times S^2,\qquad T^2\times T^2,\qquad
 T^2\times\Sigma_g,
\]
one has $e(R)=\sigma(R)=0$.  Hence a branch surface for $X_5$ would have to
satisfy
\[
 e(B)=-8,\qquad B^2=8,
\]
while one for $X_3$ would have to satisfy
\[
 e(B)=-6,\qquad B^2=4.
\]
There is a further parity obstruction in the $X_3$ case.  Since
$\operatorname{PD}[B]=2L$, one has $B^2=4L^2$.  Each of
$T^2\times S^2$, $T^4$, and $T^2\times\Sigma_g$ is spin, so its
intersection form is even and $L^2$ is even.  Consequently $B^2$ is
divisible by eight, contradicting the required value $B^2=4$.  Thus none of
these unblown-up product manifolds can be the base of an oriented
double branched-cover description of $X_3$.  The value $B^2=8$ required for
$X_5$ is not excluded by this parity argument.

If the branch surface is connected and orientable, these Euler
characteristics correspond to genera five and four.  A union of torus
multisections that are unbranched over the base has Euler characteristic
zero and cannot meet these
conditions without bands or higher-genus components.  In either product-base
case the required positive self-intersection must be realized by the branch
embedding.

More generally, for
\[
 R_m=(T^2\times S^2)\#m\CPb^2
\]
the conditions become
\[
\begin{array}{c|cc}
 & e(B) & B^2\\ \hline
 X_5 & 2m-8 & 8-4m\\
 X_3 & 2m-6 & 4-4m.
\end{array}
\]
The Matsumoto base has $m=4$, giving $(e(B),B^2)=(0,-8)$ for $X_5$.
This explains the twice-tubed Matsumoto candidate of Section~\ref{sec:x5}:
the original divisor has $(4,-8)$.  Choosing oriented tubing arcs whose
interiors are disjoint from the divisor, and using the untwisted local
framing relative to normal trivializations over the attaching disks, two
internal tubes change its Euler characteristic to zero while leaving its
class and self-intersection unchanged.

The same local quotient and boundary obstruction apply to the figure-eight
block, whose braid word is $\beta_{\rm fig}=h_\alpha h_\beta^{-1}$.

\section*{Acknowledgments}

An LLM-based tool was used for limited assistance with computational
preparation, grammar checking, and figure generation.

\end{document}